\newcommand{\Ltwo}{{L^2(\Omega,\R)}}
\newcommand{\ltwo}{{L^2(\Omega)}}
\newcommand{\Hk}{{H^k(\Omega,\R)}}
\newcommand{\hk}{{H^k(\Omega)}}
\newcommand{\Hmk}{{H^{-k}(\Omega,\R)}}
\newcommand{\hmk}{{H^{-k}(\Omega)}}
\newcommand{\Ds}{\mathcal{D}'(\Omega)}
\newcommand{\K}{\mathbb{K}}
\newcommand{\muik}[1]{{|#1| \leq k}}
\newcommand{\J}{\mathbb{J}}
\newcommand{\D}{\mathbb{D}}
\newcommand{\I}{\mathbb{I}}
\newcommand{\R}{\mathbb{R}}
\newcommand{\N}{\mathbb{N}}
\newcommand{\Z}{\mathbb{Z}}
\newcommand{\T}{\mathbb{T}}
\newcommand{\W}{\mathbb{W}}
\newcommand{\Lc}{\mathcal{L}}
\newcommand{\Ic}{\mathcal{I}}
\newcommand{\lo}{\longrightarrow}
\newcommand{\li}{\left}
\newcommand{\re}{\right}
\newcommand{\im}{\mathrm{im}}
\newcommand{\dom}{\mathrm{dom}}
\theoremstyle{plain}
\newtheorem{theorem}{Theorem}
\newtheorem{proposition}[theorem]{Proposition}
\newtheorem{lemma}[theorem]{Lemma}
\newtheorem{corollary}[theorem]{Corollary}
\theoremstyle{definition}
\newtheorem{definition}[theorem]{Definition}
\theoremstyle{remark}
\newtheorem{remark}[theorem]{Remark}
\newtheorem{example}[theorem]{Example}
\newtheorem{experiment}{Experiment}[section]
\icmltitlerunning{Learning Partial Differential Equations by Spectral Approximates of General Sobolev Spaces}
\begin{document}

\onecolumn
\icmltitle{Learning Partial Differential Equations by Spectral Approximates of General Sobolev Spaces}



\icmlsetsymbol{equal}{*}

\begin{icmlauthorlist}
\icmlauthor{Juan Esteban Suarez Cardona}{to}
\icmlauthor{Michael Hecht}{to}
\end{icmlauthorlist}

\icmlaffiliation{to}{CASUS - Center for Advanced System Understanding,  Helmholtz-Zentrum Dresden-Rossendorf e.V. (HZDR), G\"{o}rlitz, Germany}

\icmlcorrespondingauthor{Juan Esteban Suarez Cardona}{j.suarez-cardona@hzdr.de}
\icmlcorrespondingauthor{Michael Hecht}{m.hecht@hzdr.de}

\icmlkeywords{partial differential equations, spectral methods, machine learning}

\vskip 0.3in



\printAffiliationsAndNotice{}  

\begin{abstract}
  \parindent 24pt
  \parskip 1pt
  \vspace{3pt}
  We introduce a novel spectral, finite-dimensional approximation of general Sobolev spaces in terms of Chebyshev polynomials.  Based on this polynomial surrogate model (PSM), we realise a variational formulation, solving a vast class of linear and non-linear partial differential equations (PDEs). The PSMs are as flexible as the physics-informed neural nets (PINNs) and  provide an alternative for addressing inverse PDE problems, such as PDE-parameter inference. In contrast to PINNs, the PSMs result in a convex optimisation problem for a vast class of PDEs, including all linear ones, in which case the PSM-approximate is efficiently computable due to the exponential convergence rate of the underlying variational gradient descent.

  As a practical consequence prominent PDE problems were resolved by the PSMs \emph{without High Performance Computing} (HPC) on a local machine.
  This gain in efficiency is complemented by an increase of approximation power, outperforming PINN alternatives in both accuracy and runtime.

  Beyond the empirical evidence we give here, the  translation of classic PDE theory in terms of the Sobolev space approximates suggests the PSMs to be universally applicable to well-posed, regular forward and inverse PDE problems.
\end{abstract}

\section{Introduction}
Partial differential equations (PDEs) are omnipresent mathematical models governing the dynamics and (physical) laws of complex systems \cite{jost2002,brezis2011}.
However, analytic PDE solutions are rarely known for most of the systems being the centre of current research.
Therefore, there is a strong demand on efficient and accurate numerical solvers and simulations.\\
Main classic numerical solvers divide into: Finite Elements \cite{ern2004theory}; Finite Differences \cite{LeVeque2007FiniteDM}; Finite Volumes\cite{eymard2000finite}; Spectral Methods \cite{bernardi1997spectral,canuto2007spectral} and Particle Methods \cite{li2007meshfree}.\\
Machine learning methods such as:  Physics-Informed GAN \cite{pmlr-v70-arjovsky17a},
Deep Galerkin Method \cite{Sirignano2018DGMAD}, and Physics Informed Neural Networks (PINNs) \cite{RAISSI2019686}, gain big traction in the scientific computing community.
In contrast to classic solvers, PINNs provide a neural net (NN) surrogate model e.g.,  $\hat u : (-1,1)^m \lo \R$, $m \in \N$, parametrising the solution space of the PDEs and enabling to solve \emph{inverse problems} like inference of PDE parameters  or initial condition detection.  PINN-learning is given by minimising a
variational problem, which is typically formulated in $L^2$-loss terms
\begin{equation}\label{L2}
    \int_{\Omega} \big|\hat u(x) - u(x)\big|^2 d\Omega \approx \frac{1}{|P|}\sum_{p \in P}\big|\hat u(p) - u(p)\big|^2
\end{equation}
being approximated by the mean square error (MSE) in random (data) nodes $P$, \cite{Yang2020PhysicsInformedGA},\cite{Long2018PDENetLP}. The applications of PINNs range from fluid mechanics \cite{jin_nsfnets_2020} to biology \cite{lagergren_biologically-informed_2020} or medicine \cite{sahli_costabal_physics-informed_2020}, physics \cite{ellis2021accelerating} and beyond.

\subsection{Related work -- Physics Informed Neural Nets (PINNs)}
We identify the essential approaches addressing stability and accuracy of PINNs below.

\subsubsection{Variational PINNs (VPINNs)}\label{sec:VPINN}
VPINNs were introduced in  \cite{kharazmi2019variational,Kharazmi2020hpVPINNsVP}
resting on variational Sobolev losses for PINN-training. The approach exploits analytic integration and differentiation formulas of shallow neural networks with specified  activation functions. The method is extended by using quadrature rules and automatic differentiation for computing the losses and is complemented by a domain decomposition approach. The drawback of VPINNs, we identify and demonstrate here, is their highly consuming runtime performance, preventing the approach to be applicable for multi-dimensional PDE problems.

\subsubsection{Inverse Dirichlet loss balancing}\label{sec:ID}
The Inverse Dirichlet method \cite{maddu2021} was shown to increase the numerical
stability of PINNS by dynamically balancing the
occurring variational gradient amplitudes, which if unbalanced
cause numerical stiffness phenomena  \cite{wang2021understanding}. However, the PINN formulation rests on classic MSE losses, limiting the approach to consider only strong PDE problem formulations.

\subsubsection{Sobolev Cubatures PINNs (SC-PINN)}
In our prior work \cite{cardona2022replacing} we gave a PINN formulation, by replacing the MSE loss by \emph{Sobolev Cubatures}. In contrast to ID-PINNs approximating Sobolev losses enables the approach to consider PDE problems in  the weak and strong sense.
As a consequence,  the \emph{automatic differentiation} (A.D.) is replaced by \emph{polynomial differentiation} implicitly realised in the Sobolev cubatures. As we demonstrated this results in an increase of accuracy and runtime efficiency by several orders of magnitude compared to PINNs relying on A.D.

\subsection{Related Work - Classic spectral methods}
Spectral methods are well established techniques solving PDEs and ODEs. Hereby, one aims to approximate the PDE solution by an expansion $u = \sum_{\alpha \in A}c_\alpha \varphi_\alpha$, $A\subseteq \N^m$ with respect to a specific finite dimensional space $\Pi =\mathrm{span}\{\varphi_\alpha\}_{\alpha \in A}$ generated by a chosen basis, e.g., Fourier basis for periodic PDEs or Jacobi-Chebyshev polynomials for general, non-periodic problems. The coefficients of the expansion are  constrained by
the PDE and its corresponding boundary conditions. For example: Consider a (non-linear) differential operator $L$ and the equation
\begin{equation*}
    Lu = f \quad \text{in} \,\, \Omega,
\end{equation*}
with homogeneous Dirichlet boundary conditions. By sampling the function $\mathfrak{f} = f(p_\alpha)_{\alpha \in A}\in \R^{|A|}$, $A\subseteq \N^m$ in some node set $P = \{p_{\alpha}\}_{\alpha \in A}$ determination of the coefficients $C := (c_\alpha)_{\alpha \in A}\subseteq \mathbb{R}^{|A|}$ demands solving the truncated (non-linear) system:
\begin{equation*}
\mathbb{L}[C] - \mathfrak{f}\stackrel{!}{=}0\,,
\end{equation*}
where $\mathbb{L} = L_{|\Pi}$  denotes the truncated operator.
This system of equations is typically formulated as the solution of the weighted residual:
\begin{equation*}
    \langle\varphi_i, \mathbb{L}[C] - \mathfrak{f}\rangle \stackrel{!}{=} 0 \,, \quad \forall \alpha \in A.
\end{equation*}
Depending on the choice of the test functions $\varphi_i$ we obtain \emph{pseudo-spectral methods} or \emph{Galerkin spectral methods} \cite{Kang2008,canuto2007spectral,bernardi1997spectral}. If the operator $\mathbb{L}$ is linear, the problem is reduced to solving a linear system. In the non-linear case, least square methods with \emph{Newton-Raphson minimiser} are commonly used \cite{HESSARI2013318,kim2006}. Extending this formulation to  \emph{inverse problems} (inferring parameters) with \emph{general boundary conditions} and/or additional constraints without causing \emph{ill-conditioned problems} is a unresolved  challenge for classic spectral methods.
Our contribution relies on providing the demanded extensions, enabling  to addresses general forward and inverse PDE problems in a numerically stable, efficient and accurate fashion.

\subsection{Contribution}

 We present  a generalised \emph{soft-constrained spectral method} that results in a $\lambda$-convex variational optimisation problem for linear and a class of non-linear PDEs. We theoretically guarantee exponentially fast convergence of the resulting variational gradient descent. While established PINN alternatives result in non-convex variational problems, already for linear PDEs, the
spectral \emph{polynomial surrogate models} (PSMs) provide approximates of the PDE solutions outperforming PINNs in runtime and accuracy, as demonstrated in Section~\ref{sec:Num}.

Our approach rests on using \emph{Chebyshev Polynomial Surrogate Models (PSMs)}:
\begin{equation}\label{PSM}
    \hat{u}(x,\Theta) = \sum\limits_{\alpha\in A_{m,n}}\theta_\alpha T_\alpha(x)\,, \quad \Theta=(\theta_\alpha)_{\alpha\in A_{m,n}} \in \R^{|A_{m,n}|}, x \in \R^m\,,
\end{equation}
where $A_{m,n}$ denotes a multi-index set, see Section~\ref{sec:not}, and
$T_\alpha$ denotes the \emph{Cheybshev polynomial basis of first kind}  given by the relation:
\begin{equation}\label{eq:ChebP}
    T_\alpha(\cos(x)) = T_\alpha(\cos(x_1), \dots, \cos(x_m)) = \prod_{i=1}^m \cos(\alpha_i x_i) = \cos(\alpha x)
\end{equation}
for all $\alpha \in A_{m,n}$. The Chebyshev polynomials are widely used due to their excellent approximation properties extensively discussed in \cite{Trefethen2019}. In our recent work \cite{cardona2022replacing}, we already formulated (weak) PDE losses by generalising classic Gauss-Legendre cubature rules,  we termed \emph{Sobolev cubatures}. As aforementioned, for linear and a  class of non-linear PDEs the induced variational $\lambda$-convex gradient flows possess an exponential rate of convergence. The resulting PSMs deliver an increase of
accuracy up to $10$ orders of magnitude, by reducing the runtime costs up to $3$ orders of magnitude compared to PINN alternatives.
Moreover, we demonstrate the PSMs to be as flexible as PINNs for addressing  inverse PDE problems, such as  PDE-parameter inference.

In contrast to PINNs, the prominent PDE problems considered in Section~\ref{sec:Num} were solved by our PSM-method
\emph{without High Performance Computing} (HPC) on a local machine. We consequently expect the approach to deeply impact current methodology addressing  computational challenges arising across all scientific disciplines and believe that even currently non-reachable (high-dimensional, strongly varying) PDE problems can be successfully resolved due to our contribution.

\section{PDE theory}
In this section we introduce the mathematical concepts  on which our approach rest. This includes the formulation of Sobolev cubatures  \cite{cardona2022replacing}, approximating general Sobolev norms. To start with we fix the notation used throughout this article.

\subsection{Notation and basic concepts}\label{sec:not}

We denote with $\Omega=(-1,1)^m$ the open $m$-dimensional \emph{standard hypercube}, with $\bar \Omega = [-1,1]^m$ its closure, and with $\partial \Omega$ its boundary. $\|x\|_p = (\sum_{i=1}^m |x_i|^p)^{1/p}$, $x=(x_1,\ldots,x_m) \in \R^m$, $1 \leq p < \infty$, $\|x\|_{\infty} = \max_{1\leq i\leq m} |x_i|$ denotes the $l_p$-norm, and  $\left<x,y\right>$, $\|x\|$, $x,y \in \R^m$ the standard Euclidean inner product and norm on $\R^m$.

Moreover, $\Pi_{m,n} = \mathrm{span}\{x^\alpha\}_{\|\alpha\|_\infty \leq n}$ denotes the $\R$-\emph{vector space of all real polynomials} in $m$ variables spanned by all monomials
$x^{\alpha} = \prod_{i=1}^mx_i^{\alpha_i}$ of \emph{maximum degree} $n \in \N$, whereas $\Pi_{m,n}(\partial\Omega) = \{Q_{| \Omega} : Q \in \Pi_{m,n}\}$ denotes the space of restricted polynomials with support $\Omega$.

We consider the multi-index set
$A_{m,n}=\{\alpha \in \N^m : \|\alpha\|_\infty \leq n\}$  with $|A_{m,n}| = (n+1)^m$ and order $A_{m,n}$ with respect to the \emph{lexicographic order} $\preceq$ on $\N^m$ starting from last entry to the $1$st, e.g.,
$(5,3,1)\preceq (1,0,3) \preceq (1,1,3)$.
Let $\D \in \R^{|A_{m,n}|\times |A_{m,n}|}$ be a matrix we  slightly abuse notation by writing
\begin{equation}
 \D = (d_{\alpha,\beta})_{\alpha,\beta \in A_{m,n}}\,,
\end{equation}
where $d_{\alpha,\beta} \in \R$ is the $\alpha$-th, $\beta$-th entry of $\D$.

\subsection{Sobolev space theory}
We recommend \cite{Adams2003,parima2008,brezis2011} for an excellent overview on functional analysis and Sobolev space theory including the concepts we shortly  summarise: We denote with $C^k(\Omega,\R)$, $k \in \N \cup\{\infty\}$ the
\emph{Banach spaces}
of all $k$-times continuously differentiable functions with norm $\|f\|_{C^k(\Omega)} = \sum_{i=0}^k \sup_{x \in \Omega,\|\alpha\|_1 =i}|D^\alpha f(x)|$.
The Sobolev spaces
\begin{equation*}
    \begin{split}
      H^k(\Omega,\R) &= \li\{ f \in L^2(\Omega,\R) : D^\alpha f \in L^2(\Omega,\R)\re\}\,, \\
    \end{split}
\end{equation*}
$\|\alpha\|_1 = \sum_{i=1}^m \alpha_i \leq k$, $k \in \N$ are
given by all $L^2$-integrable functions $f : \Omega \lo \R$ with existing  $L^2$-integrable weak derivatives $D^\alpha f= \partial^{\alpha_1}_{x_1}\ldots\partial^{\alpha_m}_{x_m}f$ up to order $k$. In fact,
$H^k(\Omega,\R)$ is a Hilbert space with inner product
$$
    \li<f,g\re>_{H^k(\Omega)} =  \sum_{0 \leq \|\alpha\|_1 \leq k}\li<D^\alpha f,D^\alpha g\re>_{L^2(\Omega)}
$$
and norm $\|f\|_{H^k(\Omega)}^2 = \li<f,f\re>_{H^k(\Omega)}$. Thus, the embeddings
$j : H^{k}(\Omega,\R) \hookrightarrow  H^{k'}(\Omega)$ are well defined and continuous for all $k' \leq k$ due to
$\|\cdot\|_{H^{k'}(\Omega} \leq \|\cdot\|_\Hk$, whereas $H^0(\Omega,\R) = L^2(\Omega,\R)$, with $\li<f,g\re>_{L^2(\Omega)} = \int \limits_{\Omega}f\cdot g \, d\Omega$.

For $k\geq 1$ the trace operator
\begin{equation}\label{eq:tr}
    \mathrm{tr}: H^k(\Omega,\R) \lo L^2(\partial \Omega,\R)
\end{equation}
is defined as usual as the $H^k$-extension of the classic continuous trace $\mathrm{tr}(u) = u_{| \partial \Omega}$ with domain $\mathrm{dom}(\mathrm{tr})= C^0(\bar \Omega,\R)$.
The Sobolev spaces with zero trace are denoted as usual with $H^k_0(\Omega,\R) = \{u \in H^k(\Omega,\R) : \mathrm{tr}(u) =0\}$, $k \geq 1$ and can be alternatively defined as completion of the space of smooth functions that vanish on the boundary $\partial \Omega$ of $\Omega$, i.e.,
 $$H^k_0(\Omega,\R) = \overline{ C^\infty_0(\Omega,\R)}^{\|\cdot\|_{H^k(\Omega)}}\,, \quad
C^\infty_0(\Omega,\R) = \{f \in C^\infty(\Omega,\R): f_{|\partial \Omega} =0\}\,.$$

We further consider the space of all distributions $\Ds=\{F : C^\infty_0(\bar \Omega) \lo \R\}$ also known as \emph{generalised functions} (being the dual space of all test functions $C^\infty_0(\bar \Omega)= \{f \in C^\infty(\Omega) : f_{|\partial \Omega} = 0 \}$ with respect to the \emph{canonical LF topology}).
We associate the
\emph{negative order Sobolev space} as the completion of $\Ds$
with respect to the following norm
\begin{equation}\label{eq:Hkm_norm}
    H^{-k}(\Omega,\R) := \overline{\Ds}^{\|\cdot\|_{H^{-k}(\Omega)}}\,, \quad
    \|F \|_\Hmk = \sup_{u \in \Hk} \frac{|Fu|}{\| u \|_\Hk}\,,
\end{equation}
yielding a separable, reflexive Hilbert space \cite{lax1955}.

The weak PDE formulations and their underlying Hilbert space choice we will propose later on require the notion of \emph{adjoint (differential) operators}. We recall the  definition.

\begin{definition}[Adjoint operators]
    \label{gen_adj}
     Let $(K, \|\cdot\|_K), (H, \|\cdot\|_H)$ be Hilbert spaces and $T: \dom(T) \subseteq K \lo H$, $T^*: \dom(T^*)\subseteq H \lo K$ be linear operators with dense domains. Then $T^*$ is called an adjoint operator of
     $T$ if and only if
    \begin{equation*}
        \langle Tx, y \rangle_H = \langle x, T^*y \rangle_K
    \end{equation*}
    for all $x \in \dom(T)$ and $y \in \dom(T^*)$.
\end{definition}
\begin{example}
    Consider  $\partial_{x_i} : \Ltwo \lo \Ltwo$ as the  differential operator in the weak sense.
    Then its domain is given by  $\dom(\partial_{x_i}) = H^1(\Omega, \R) \subseteq L^2(\Omega,\R)$, which is a dense subset. Following  Definition~\ref{gen_adj}, and applying integration by parts, an adjoint operator  $\partial_{x_i}^*: \Ltwo \lo \Ltwo$, with domain $\dom(\partial_{x_i}^*) = H_0^1(\Omega, \R)$ is given by
    $\partial_{x_i}^* = - \partial_{x_i}$.
\end{example}
We link the spaces $H^{-k}(\Omega,\R)$ and $H^k(\Omega,\R)$ due to the following fact.
\begin{proposition}\label{prop:Hmk} Let  $j: \Hk \hookrightarrow \Ltwo$, $k \in \N$ be the embedding with adjoint operator $j^* : \Ltwo \lo \Hk$.
    Let $f,g \in \Ltwo$ and the distributions
    $F = \langle f, \cdot \rangle_\Ltwo, G = \langle g, \cdot \rangle_\Ltwo  \in \Hmk$, with $f \in \Ltwo$. Then
    \begin{equation*}
        \|F\|_\Hmk = \|j^* f\|_\hk\,, \quad \langle F, G\rangle_{\hmk} = \langle j^*f, j^* g\rangle_\hk\,.
    \end{equation*}
\end{proposition}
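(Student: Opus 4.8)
The plan is to reduce both assertions to the Riesz representation theorem together with the defining property of the adjoint $j^*$. First I would record the structural facts: the embedding $j:\Hk\hookrightarrow\Ltwo$ is everywhere defined, linear and contractive ($\|u\|_\ltwo\le\|u\|_\hk$), so its Hilbert-space adjoint $j^*:\Ltwo\lo\Hk$ is everywhere defined and bounded and satisfies $\langle f,ju\rangle_\Ltwo=\langle j^*f,u\rangle_\hk$ for all $f\in\Ltwo$, $u\in\Hk$ (this is exactly the sense of \cref{gen_adj}). I would also check that $F=\langle f,\cdot\rangle_\Ltwo$ genuinely lies in $\Hmk$: as an $L^2$-function $f$ is a distribution, and by Cauchy--Schwarz $|Fu|\le\|f\|_\ltwo\|u\|_\ltwo\le\|f\|_\ltwo\|u\|_\hk$, so $\|F\|_\Hmk\le\|f\|_\ltwo<\infty$ and $F$ belongs to the completion defining $\Hmk$.

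The central step is the identity
\begin{equation*}
  Fu=\langle f,u\rangle_\Ltwo=\langle f,ju\rangle_\Ltwo=\langle j^*f,u\rangle_\hk,\qquad u\in\Hk,
\end{equation*}
which exhibits $j^*f\in\Hk$ as the Riesz representative of the bounded functional $F$ with respect to the $\Hk$-inner product. The first equality of the proposition then follows immediately, since
\begin{equation*}
  \|F\|_\Hmk=\sup_{0\neq u\in\Hk}\frac{|\langle j^*f,u\rangle_\hk|}{\|u\|_\hk}=\|j^*f\|_\hk,
\end{equation*}
where ``$\le$'' is Cauchy--Schwarz in $\Hk$ and equality is attained at $u=j^*f$ (the case $j^*f=0$ being trivial).

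For the inner-product identity I would use that $(\Hmk,\|\cdot\|_\Hmk)$ is a Hilbert space, as quoted from \cite{lax1955}; on a real Hilbert space the inner product is uniquely determined by the norm via the polarisation identity, so it suffices to compute with norms. Since $F\pm G=\langle f\pm g,\cdot\rangle_\Ltwo$ and $j^*$ is linear, the norm identity just proved gives $\|F\pm G\|_\Hmk=\|j^*f\pm j^*g\|_\hk$; inserting this into polarisation on $\Hmk$ and recognising the right-hand side, again via polarisation but now on $\Hk$, yields $\langle F,G\rangle_\hmk=\langle j^*f,j^*g\rangle_\hk$.

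I expect the only delicate part to be the functional-analytic bookkeeping attached to the completion that defines $\Hmk$ --- namely verifying that the $L^2$-type distributions $F$ extend to bounded functionals on the whole of $\Hk$, so that the supremum in the norm formula is legitimate, and that the Hilbert-space inner product supplied by \cite{lax1955} is indeed the polarisation of the stated norm. Once these soft points are settled, the proposition is a direct consequence of the adjoint relation, Riesz representation, and the polarisation identity, with no genuine computation involved.
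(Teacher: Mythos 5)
Your proposal is correct and follows essentially the same route as the paper: the central identity $Fu=\langle f,ju\rangle_\Ltwo=\langle j^*f,u\rangle_\hk$ is exactly what drives the paper's two-sided estimate, with Cauchy--Schwarz giving ``$\leq$'' and evaluation at $u=j^*f$ giving the reverse inequality. Your polarisation argument for the inner-product identity is a clean, explicit version of the paper's ``follows analogously,'' and your preliminary checks (boundedness of $j$, $F\in\Hmk$) are sensible bookkeeping that the paper leaves implicit.
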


\begin{proof}
The proof is derived directly from the definition of the $\Hmk$-norm
in Eq.~\eqref{eq:Hkm_norm}:
\begin{align*}
    \|j^* f\|_\hk &= \frac{\|j^* f\|_\hk^2}{\|j^* f\|_\hk} = \frac{|\langle j f, j^* f \rangle_\ltwo|}{\|j^* f\|_\hk}  = \frac{|\langle f, j^* f \rangle_\ltwo|}{\|j^* f\|_\hk}\\
                  & \leq \sup_{u \in \Hk} \frac{|\langle f, u \rangle_\ltwo|}{\| u\|_\hk}=  \|F\|_\hmk\,.
\end{align*}
Vice versa, applying the Cauchy-Schwarz inequality yields
    \begin{align*}
        \|F\|_\Hmk &= \sup_{u \in \Hk} \frac{|\langle f, j u \rangle_\ltwo|}{\| u \|_\hk} = \sup_{u \in \Hk} \frac{|\langle j^* f, u \rangle_\hk|}{\| u \|_\hk} \\
        &\leq \sup_{u \in \Hk} \frac{\|j^* f\|_\hk \|u\|_\hk}{\| u \|_\hk} = \|j^*f\|_\hk\,,
    \end{align*}
implying the claimed equality. The statement for the inner product follows analogously.
\end{proof}

A main ingredient of all further considerations are the truncated $L^2$- or $H^k$-inner products that rest on adaptions of classic Gauss-Legendre cubatures, which we provide next.
\subsection{Orthogonal polynomials and Gauss-Legendre cubatures}

Here, we recapture the underlying concept of orthogonal polynomials:
Let $m,n\in \N$ and $P_{m,n} = \oplus_{i=1}^m \mathrm{Leg_n} \subseteq \Omega$ be the we the $m$-dimensional Legendre grids, where $\mathrm{Leg_n}=\{p_0,\ldots,p_n\}$ are the $n+1$ \emph{Legendre nodes} given by the roots of the \emph{Legendre polynomials} of degree $n+2$
We  denote $p_{\alpha} = (p_{\alpha_1}, \ldots, p_{\alpha_m}) \in P_{m,n}$, $\alpha \in A_{m,n}$. It is a classic fact \cite{stroud,stroud2,trefethen2017,Trefethen2019}, that the Lagrange polynomials $L_{\alpha}\in \Pi_{m,n}$, $\alpha \in A_{m,n}$  given by
\begin{equation}\label{eq:Lag}
    L_{\alpha} = \prod_{i=1}^ml_{\alpha_i,i}\,, \quad l_{j,i} = \prod_{j \not = i, j=0}^m \frac{x_i -p_j}{p_i-p_j}\,,
\end{equation}
satisfy $L_{\alpha}(p_\beta)=\delta_{\alpha,\beta}$, $\forall \, \alpha,\beta \in A_{m,n}$ and
form an orthogonal $L^2$-basis of $\Pi_{m,n}$, i.e.,
$$\li<L_{\alpha},L_{\beta}\re>_{L^2(\Omega)}=\int \limits_{\Omega} L_{\alpha}(x)L_{\beta}(x)d\Omega = w_{\alpha} \delta_{\alpha,\beta}\,,$$
$\forall \, \alpha,\beta \in A_{m,n}$, where $\delta_{\cdot,\cdot}$ denotes the \emph{Kronecker delta} and
\begin{equation}\label{eq:GLW}
    w_{\alpha} = \|L_{\alpha}\|^2_{L^2(\Omega)}
\end{equation} the efficiently computable \emph{Gauss-Legendre cubature weight} \cite{stroud,stroud2,trefethen2017,Trefethen2019}.
Consequently, for any polynomial $Q \in \Pi_{m,2n+1}$ of degree $2n+1$ the following cubature rule applies:
\begin{equation}\label{eq:Gauss}
    \int \limits_{\Omega} Q(x)d\Omega = \sum_{\alpha \in A_{m,n}} w_{\alpha}Q(p_{\alpha})\,.
\end{equation}
Summarising: Polynomials of degree $2n+1$ can be (numerically) integrated exactly when sampled on the Legendre grid $P_{m,n}$ of order $n+1$. Thanks to $|P_{m,n}| = (n+1)^m \ll (2n+1)^m$ this makes \emph{Gauss-Legendre integration} a very powerful scheme yielding
\begin{equation}\label{eq:L2}
\li <Q_1,Q_2 \re>_{L^2(\Omega)} = \int \limits_{\Omega_m} Q_1(x)Q_2(x)d\Omega_m = \sum_{\alpha \in A_{m,n}} Q_1(p_\alpha)Q_2(p_\alpha) w_\alpha \,,
\end{equation}
for all $Q_1,Q_2 \in \Pi_{m,n}$. In light of this fact, we propose the following definition.
\begin{definition}[Legendre interpolation and $L^2$-projection ] Let $m,n \in \N$, $P_{m,n}$ be the Legendre grid and $L_\alpha$, $\alpha \in A_{m,n}$ be the corresponding Lagrange polynomials from Eq.\eqref{eq:Lag}.
For continuous functions
$f :\bar \Omega \lo \R$ we denote with
\begin{equation}\label{eq:int_op}
  \Ic_{m,n} : C^0(\Omega,\R)\lo \Pi_{m,n}\,, \quad   \Ic_{m,n}(f) = \sum_{\alpha \in A_{m,n}}f(p_{\alpha}) L_{\alpha} \in \Pi_{m,n}
\end{equation}
the interpolation operator. Moreover, we denote with
\begin{equation}
  \pi_{m,n}: \Ltwo \lo \Pi_{m,n}\,, \quad  \pi_{m,n}(f) = \sum_{\alpha \in A_{m,n}}\frac{1}{w_\alpha}\langle f,  L_{\alpha}\rangle_{\ltwo}L_\alpha \in \Pi_{m,n}
\end{equation}
the $L^2$-projection.
\label{def:pro}
\end{definition}
\begin{remark} It is important to note that $\Ic_{m,n}(f) \neq \pi_{m,n}(f)$ in general.  However, both operators are projections that due to Eq.~\eqref{eq:L2} satisfy
\begin{align*}
    \pi_{m,n}(\pi_{m,n}(f)) = \pi_{m,n}(f)\,, \quad   & \Ic_{m,n}(\Ic_{m,n}(f))= \Ic_{m,n}(f)\,,\\
    \Ic_{m,n}(\pi_{m,n}(f))= \Ic_{m,n}(f)\,, \quad & \pi_{m,n}(\Ic_{m,n}(f))= \Ic_{m,n}(f)\,.
\end{align*}
In fact, both concepts can deliver exponential fast approximation rates (truncation errors) in case the considered function $f$ is analytic \cite{Trefethen2019}.
\end{remark}

How differential operators acting on polynomial spaces can be understood due to these concepts is proposed in the next section.

\subsection{Truncated differential and adjoint operators}

Based on Eq.~\eqref{eq:Lag} we derive exact matrix representations of differential operators acting on the polynomial spaces $\Pi_{m,n}$. This allows to extend Eq.~\eqref{eq:L2} and deliver approximates of the Sobolev norms for general functions $f \in H^k(\Omega,\R)$, $k \in \N$.

For $L_\alpha \in \Pi_{m,n}$ from Eq.~\eqref{eq:Lag} and $ 1\leq i \leq m$ the computation of the values $d_{\alpha,\beta} = \partial_{x_i} L_\alpha (p_{\beta})$, $p_\beta\in P_{m,n}$, $\forall\,\beta \in A_{m,n}$ yield the Lagrange expansion
\begin{equation}
    \partial_{x_i} L_\alpha (x) = \sum_{\beta \in A_{m,n}} d_{\alpha,\beta}L_{\beta}(x) \,.
\end{equation}
Consequently, the matrix
\begin{equation}\label{eq:DI}
    D_i = (d_{\alpha,\beta})_{\alpha,\beta \in A_{m,n}} \in \R^{|A_{m,n}|\times |A_{m,n}|}\,,
\end{equation}
represents the finite dimensional truncation of the differential operator
$\partial_{x_i} : C^1(\Omega,\R) \lo C^0(\Omega,\R)$ to the polynomial space $\Pi_{m,n}$ and for $\beta \in \N^m$ we set
\begin{equation}\label{eq:Dbeta}
    \D_{\beta} = \prod_{j=1}^m D_{\beta_i}\,, \quad \text{with}\,\,\,  D_0 = \I\,,
\end{equation}
to be the approximation of the differential operator $\partial_\beta:=\partial_{x_1}^{\beta_1}\ldots\partial_{x_m}^{\beta_m}$.

For representing the truncation of general adjoint operators we
we consider  the Legendre grid $P_{m,n} = \{p_\alpha : \alpha \in A_{m,n}\}$, $m,n,\in\N$ the positive, symmetric Gauss-Legendre cubature weight matrix  $\W_{m,n} = \mathop{diag}(w_{\alpha})_{\alpha \in A_{m,n}}$, and the evaluation vector $\mathfrak{f} = (f(P_{\alpha}))_{\alpha \in A_{m,n}}\in \R^{|A_{m,n}|}$ for a given function $f : \Omega \lo \R$. With these ingredients we state:

\begin{proposition}\label{prop:adjoint}
    Let  $D_\beta: \Ltwo \lo \Ltwo$, $\beta \in \N^m$  be a differential operator and
    $\D_\beta : \Pi_{m,n}(\Omega) \lo \Pi_{m,n}(\Omega)$ be its truncation to the polynomial space.
    Then the matrix representation of the truncated adjoint operator $\D_\beta^*: \Pi_{m,n}(\Omega) \lo \Pi_{m,n}(\Omega)$
    is given by:
\begin{equation}\label{eq:adjoint}
        \D_\beta^* = \W_{m,n}^{-1} \D_\beta^\top \W_{m,n}\,.
\end{equation}

\end{proposition}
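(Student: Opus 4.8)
The plan is to reduce the identity to a finite-dimensional computation, using the discrete $L^2$-inner product on $\Pi_{m,n}$ and the fact that the Gauss-Legendre cubature is exact on the relevant polynomial spaces. First I would fix the orthogonal basis $\{L_\alpha\}_{\alpha\in A_{m,n}}$ of Lagrange polynomials and recall from Eq.~\eqref{eq:L2} that for $Q_1,Q_2\in\Pi_{m,n}$ one has $\langle Q_1,Q_2\rangle_{L^2(\Omega)} = \mathfrak{q}_1^\top \W_{m,n}\,\mathfrak{q}_2$, where $\mathfrak{q}_i$ is the vector of values $(Q_i(p_\alpha))_{\alpha\in A_{m,n}}$. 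Thus, under the coordinate isomorphism $\Pi_{m,n}\cong\R^{|A_{m,n}|}$ sending a polynomial to its value-vector on the Legendre grid, the $L^2$-inner product is represented by the (symmetric positive-definite) Gram matrix $\W_{m,n}$, and the operator $\D_\beta$ (as defined in Eq.~\eqref{eq:DI}--\eqref{eq:Dbeta}) acts on these value-vectors by the matrix $\D_\beta$ itself, since $\D_\beta$ was constructed precisely so that $(\D_\beta \mathfrak{q})$ is the value-vector of $\partial_\beta Q$ whenever $\partial_\beta Q\in\Pi_{m,n}$.

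Next I would invoke the abstract characterisation of the adjoint on a finite-dimensional inner product space: if $V$ carries the inner product $\langle x,y\rangle = x^\top G y$ with $G$ symmetric positive-definite, then the adjoint of a linear map represented by a matrix $M$ is represented by $G^{-1}M^\top G$, because $\langle Mx,y\rangle = (Mx)^\top G y = x^\top (M^\top G) y = x^\top G (G^{-1}M^\top G) y = \langle x, (G^{-1}M^\top G)y\rangle$, and this is the unique matrix with that property since $G$ is invertible. Applying this with $V=\Pi_{m,n}$, $G=\W_{m,n}$, $M=\D_\beta$ yields $\D_\beta^* = \W_{m,n}^{-1}\D_\beta^\top\W_{m,n}$, which is the claimed Eq.~\eqref{eq:adjoint}. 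One should also check that $\D_\beta^*$ so defined indeed satisfies $\langle\D_\beta Q_1,Q_2\rangle_{L^2(\Omega)} = \langle Q_1,\D_\beta^* Q_2\rangle_{L^2(\Omega)}$ for all $Q_1,Q_2\in\Pi_{m,n}$, consistent with Definition~\ref{gen_adj} restricted to the dense (indeed full) subspace $\Pi_{m,n}$; this is immediate from the computation above.

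The one subtlety I would be careful about — and which I expect to be the main obstacle — is the exactness of the cubature: the formula $\langle Q_1,Q_2\rangle_{L^2(\Omega)} = \sum_\alpha Q_1(p_\alpha)Q_2(p_\alpha)w_\alpha$ requires the integrand $Q_1Q_2$ to have degree at most $2n+1$, which holds for $Q_1,Q_2\in\Pi_{m,n}$, but $\D_\beta Q_1$ need not lie in $\Pi_{m,n}$ if one thinks of $\partial_\beta$ literally (differentiation lowers degree, so this is fine) — the real point is that $\D_\beta$ is \emph{defined} as the value-vector map of the interpolation $\mathcal{I}_{m,n}\circ\partial_\beta$ on $\Pi_{m,n}$, so that $\D_\beta Q_1\in\Pi_{m,n}$ by construction and the cubature applies to the pair $(\D_\beta Q_1, Q_2)$. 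Once this bookkeeping is in place, the proof is just the two-line matrix manipulation above; I would present it by first stating the Gram-matrix representation of $\langle\cdot,\cdot\rangle_{L^2(\Omega)}$ on $\Pi_{m,n}$ as a lemma-style observation, then deriving Eq.~\eqref{eq:adjoint} from the abstract adjoint formula, and finally remarking on uniqueness via invertibility of $\W_{m,n}$.
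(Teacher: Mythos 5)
Your proposal is correct and follows essentially the same route as the paper: both represent the $L^2$-inner product on $\Pi_{m,n}$ by the Gram matrix $\W_{m,n}$ acting on evaluation vectors (via the exactness of the Gauss--Legendre cubature, Eq.~\eqref{eq:L2}) and then read off the adjoint as $\W_{m,n}^{-1}\D_\beta^\top\W_{m,n}$ from the identity $\mathfrak{q}_1^\top\D_\beta^\top\W_{m,n}\mathfrak{q}_2 = \mathfrak{q}_1^\top\W_{m,n}(\W_{m,n}^{-1}\D_\beta^\top\W_{m,n})\mathfrak{q}_2$. Your explicit remarks on uniqueness via invertibility of $\W_{m,n}$ and on why $\D_\beta Q_1\in\Pi_{m,n}$ (so the cubature applies) make the bookkeeping slightly more careful than the paper's, but the argument is the same.
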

\begin{proof}

We derive Eq.~\eqref{eq:adjoint} due to the Gauss-cubature in terms of Eq.~\eqref{eq:L2}.
Let $Q_1,Q_2\in \Pi_{m,n}$, and denote with $\mathfrak{q}_1=(Q_1(p_\alpha))_{\alpha \in A_{m,n}},
\mathfrak{q}_2= (Q_2(p_\alpha))_{\alpha \in A_{m,n}}  \in \R^{|A_{m,n}|}$ the corresponding evaluation vectors. Then we compute
 \begin{align*}
\langle D_\beta Q_1, Q_2 \rangle_\Ltwo &= \langle  \D \mathfrak{q}_1, \W_{m,n} \mathfrak{q}_2 \rangle
                 = \mathfrak{q}_1^\top \D_\beta^\top \W_{m,n} \mathfrak{q}_2
                 =\mathfrak{q}_1^\top \W_{m,n} \W_{m,n}^{-1} \D_\beta^\top \W_{m,n} \mathfrak{q}_2\\
                &= \langle \W_{m,n}^\top \mathfrak{q}_1, \D_\beta^* \mathfrak{q}_2 \rangle =   \langle  \mathfrak{q}_1, \W_{m,n} \D_\beta^* \mathfrak{q}_2 \rangle
                = \langle Q_1, D_\beta^*Q_2 \rangle_\Ltwo \,,
\end{align*}
proving the statement.
\end{proof}

We provide a matrix representation of the truncation of the adjoint operator $j^*:\Hk \lo \Ltwo$ of the embedding $j :\Hk \lo \Ltwo$.
\begin{theorem}\label{theo:Jstar} Let $j ^* : \Ltwo \lo \Hk$ be the adjoint operator of the embedding $j : \Hk \lo \Ltwo$.
Denote with $\D_\beta$ the representations of the derivatives from Eq.~\eqref{eq:Dbeta} then its truncation
$J^* : \Pi_{m,n}(\Omega) \subseteq \Ltwo \lo \Pi_{m,n}(\Omega) \subseteq \Hk
$
can be represented by the matrix $\J^* \in \R^{|A_{m,n}| \times |A_{m,n}|}$
given by
    \begin{equation}\label{eq:Jstar}
        \J^* = \Big(\sum_\muik{\beta} \D_\beta^* \D_\beta\Big)^{-1}\,.
    \end{equation}
\end{theorem}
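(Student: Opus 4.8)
The plan is to compute the matrix of $J^*$ directly from the defining property of an adjoint operator (Definition~\ref{gen_adj}), working entirely on the finite-dimensional space $\Pi_{m,n}$ where the truncated inner products are \emph{exact} by the Gauss--Legendre cubature rule of Eq.~\eqref{eq:L2}. Concretely, $J^* : \Pi_{m,n} \subseteq \Ltwo \lo \Pi_{m,n} \subseteq \hk$ must satisfy $\langle j Q_1, Q_2\rangle_{\ltwo} = \langle Q_1, J^* Q_2\rangle_{\hk}$ for all $Q_1, Q_2 \in \Pi_{m,n}$. The left-hand side is just $\langle Q_1, Q_2\rangle_{\ltwo}$; the right-hand side, by the definition of the $H^k$-inner product, is $\sum_{\muik{\beta}} \langle \partial_\beta Q_1, \partial_\beta (J^* Q_2)\rangle_{\ltwo}$. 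So the identity to be established is
\begin{equation*}
    \langle Q_1, Q_2\rangle_{\ltwo} = \sum_{\muik{\beta}} \langle \partial_\beta Q_1, \partial_\beta (J^* Q_2)\rangle_{\ltwo}\,, \qquad \forall\, Q_1, Q_2 \in \Pi_{m,n}\,.
\end{equation*}

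First I would pass to evaluation vectors on the Legendre grid. Writing $\mathfrak{q}_i$ for the evaluation vector of $Q_i$, the truncated derivatives act as $\D_\beta \mathfrak{q}_i$ (the evaluation vector of $\partial_\beta Q_i$, exact since $Q_i \in \Pi_{m,n}$), and the $L^2$-inner product becomes the $\W_{m,n}$-weighted Euclidean inner product by Eq.~\eqref{eq:L2}. Hence the right-hand side reads $\sum_{\muik{\beta}} \langle \D_\beta \mathfrak{q}_1, \W_{m,n} \D_\beta \J^* \mathfrak{q}_2\rangle = \mathfrak{q}_1^\top \big(\sum_{\muik{\beta}} \D_\beta^\top \W_{m,n} \D_\beta\big) \J^* \mathfrak{q}_2$. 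Using the adjoint identity of Proposition~\ref{prop:adjoint}, namely $\W_{m,n}\D_\beta^* = \D_\beta^\top \W_{m,n}$, I can rewrite $\D_\beta^\top \W_{m,n} \D_\beta = \W_{m,n} \D_\beta^* \D_\beta$, so the right-hand side equals $\mathfrak{q}_1^\top \W_{m,n} \big(\sum_{\muik{\beta}} \D_\beta^* \D_\beta\big)\J^* \mathfrak{q}_2$. The left-hand side is $\mathfrak{q}_1^\top \W_{m,n}\mathfrak{q}_2$. Since this must hold for all $\mathfrak{q}_1,\mathfrak{q}_2 \in \R^{|A_{m,n}|}$ and $\W_{m,n}$ is invertible, we get $\big(\sum_{\muik{\beta}} \D_\beta^* \D_\beta\big)\J^* = \I$, i.e. $\J^* = \big(\sum_{\muik{\beta}} \D_\beta^* \D_\beta\big)^{-1}$, as claimed.

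The one genuine obstacle is \emph{invertibility} of $\mathbb{M} := \sum_{\muik{\beta}} \D_\beta^* \D_\beta$, which is needed both to make sense of the formula and to justify that $J^*$ is well-defined on all of $\Pi_{m,n}$. I would argue this by observing that $\mathbb{M}$ is the Gram matrix of the $\hk$-inner product restricted to $\Pi_{m,n}$ expressed in the (weighted) nodal basis: for $Q \in \Pi_{m,n}$ with evaluation vector $\mathfrak{q}$, one has $\mathfrak{q}^\top \W_{m,n}\mathbb{M}\,\mathfrak{q} = \sum_{\muik{\beta}}\|\partial_\beta Q\|_{\ltwo}^2 = \|Q\|_{\hk}^2 > 0$ whenever $Q \neq 0$. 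Thus $\W_{m,n}\mathbb{M}$ is symmetric positive definite (symmetry follows from Proposition~\ref{prop:adjoint}: $(\W_{m,n}\D_\beta^*\D_\beta)^\top = \D_\beta^\top \W_{m,n}\D_\beta = \W_{m,n}\D_\beta^*\D_\beta$), hence invertible, and since $\W_{m,n}$ is invertible so is $\mathbb{M}$. A minor point to state carefully is that $\partial_\beta Q$ for $Q\in\Pi_{m,n}$ lies in $\Pi_{m,n}$ again (degrees only drop), so the cubature rule Eq.~\eqref{eq:L2} applies verbatim to all the inner products appearing above and no quadrature error is incurred; this is what makes the finite-dimensional computation exact rather than merely approximate.
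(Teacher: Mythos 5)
Your proposal is correct and follows essentially the same route as the paper: both reduce to the cubature-exact identity $\langle Q_1,Q_2\rangle_{\hk}=\langle\bigl(\sum_{\muik{\beta}}D_\beta^*D_\beta\bigr)Q_1,Q_2\rangle_{\ltwo}$ on $\Pi_{m,n}$, identify $\J^{*-1}=\sum_{\muik{\beta}}\D_\beta^*\D_\beta$, and obtain invertibility from symmetric positive definiteness. Your version is somewhat more explicit than the paper's in spelling out the adjoint-defining property of $J^*$, the symmetry of $\W_{m,n}\sum_{\muik{\beta}}\D_\beta^*\D_\beta$, and the degree argument guaranteeing exactness of the cubature, but the underlying argument is the same.
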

\begin{proof} Let $Q_1, Q_2 \in \Pi_{m,n}$,  $P_{m,n}$ the Legendre grid and  $\mathfrak{q}_1=(Q_1(p_\alpha))_{\alpha \in A_{m,n}},
\mathfrak{q}_2= (Q_2(p_\alpha))_{\alpha \in A_{m,n}}  \in \R^{|A_{m,n}|}$ the evaluation vectors,respectively. Then we compute
    \begin{align*}
            \langle Q_1, Q_2 \rangle_\hk &= \sum_\muik{\beta} \langle D_\beta Q_1, D_\beta Q_2 \rangle_\Ltwo
            = \sum_\muik{\beta} \langle D_\beta^* D_\beta Q_1,Q_2\rangle_\Ltwo\\
            &= \langle \big (\sum_\muik{\beta}  D_\beta^* D_\beta\big) Q_1, Q_2 \rangle_\Ltwo\,.
    \end{align*}
Thus, setting $J^{* -1}:= \sum_\muik{\beta}  D_\beta^* D_\beta$ yields that due to the identity above $J^{*-1}$ is a symmetric and positive definite linear operator on a finite dimensional space implying its invertibility.
Due to
\begin{equation*}
\langle \big (\sum_\muik{\beta}  D_\beta^* D_\beta\big) Q_1, Q_2 \rangle_\Ltwo=  \langle \big(\sum_\muik{\beta}\D_\beta^* \D_\beta\big) \mathfrak{q}_1,\mathfrak{q}_2 \rangle
\end{equation*}
we realise that
$\J^{*-1}:=\sum_\muik{\beta}\D_\beta^* \D_\beta$ represents $J^{*-1}$.
\end{proof}

As introduced, the PSMs rely on the Chebyshev polynomials $\{T_{\alpha}\}_{\alpha \in A_{m,n}}$, $m,n\in\N$, Eq.~\eqref{eq:ChebP}. For later purpose we provide the basis transformation between the $T_{\alpha}$ and the  Lagrange basis $L_\alpha$ in the Legendre grid $P_{m,n}$. That is to consider the matrix
\begin{equation}\label{eq:chebTR}
    \T = (T_{\beta}(p_\alpha))_{\alpha,\beta \in A_{m,n}} \in \R^{|A_{m,n}|\times|A_{m,n}|} \quad \text{and its inverse}\quad \T^{-1} \in \R^{|A_{m,n}|\times|A_{m,n}|}\,.
\end{equation}
Given Lagrange coefficients $C = (c_{\alpha})_{\alpha \in A_{m,n}}$ of a polynomial $Q= \sum_{\alpha \in A_{m,n}}c_\alpha L_\alpha$, $\Theta=(\theta_\alpha)_{\alpha \in A_{m,n}}=\T^{-1}C$ yields the  coefficients of its Chebyshev representation $Q=\sum_{\alpha \in A_{m,n}}\theta_\alpha T_\alpha $. Vice versa $D=(d_\alpha)_{\alpha \in A_{m,n}}=\T\Theta$ yields the Lagrange coefficients of its Chebyshev expansion.
We close this section, by deriving a matrix representation of the trace operator, Eq.~\eqref{eq:tr}:
\begin{definition}[Truncated trace operator]\label{def:TR} Let
$\mathrm{tr}: H^k(\Omega,\R) \lo L^2(\partial \Omega,\R)$
be the trace operator, Eq.~\eqref{eq:tr}.
Denote with $P_{m-1,n,j}^\pm \subseteq\partial\Omega_{j}^{\pm}$  the $\mbox{m-1}$-dimensional Legendre grids for each of the faces  $\partial\Omega_{j}^{\pm} =\{x \in \Omega : x_j = \pm 1\}$ of the hypercube $\Omega$.
Then the matrix  $\mathbb{S}_{m,n,j}^\pm\in \in \R^{|A_{m-1,n}|\times |A_{m,n}|}$ with
\begin{equation}\label{eq:TR}
      \mathbb{S}_{m,n,j}^\pm=  (T_{\alpha}(p_\gamma))_{(\gamma,\alpha) \in A_{m-1,n}\times A_{m,n}}\,, \quad p_\gamma \in P_{m-1,n,j}^\pm\,, j=1,\dots,m\,.
\end{equation}
represents the truncated trace operator
$\mathrm{tr}:\Pi_{m,n} \lo \Pi_{m-1,n}(\partial\Omega^\pm_j)$ for each of the faces $\partial\Omega^\pm_j$.
\end{definition}

The derived representations of the truncated differential and adjoint operators enable to derive cubature rules for the truncated Sobolev spaces.

\subsection{Sobolev cubatures}
Based on the classic Gauss-Legendre cubature Eq.~\eqref{eq:L2} we, here, derive general \emph{Sobolev cubatures}.
We start by defining:

\begin{definition}[Truncated (dual) inner product and norm] \label{def:dual}For $\beta \in \N^m$, $\|\beta\|_1 \leq k$, $m,n \in \N$ we consider the truncated differential operator $D_\beta$ and its adjoint
$D_\beta : \Pi_{m,n}(\Omega) \lo \Pi_{m,n}(\Omega)$, $ D_\beta^* : \Pi_{m,n}(\Omega) \lo \Pi_{m,n}(\Omega)$ satisfying
\begin{equation*}
  \langle D_\beta Q_1, Q_2\rangle_\ltwo = \langle  Q_1, D_\beta^* Q_2\rangle_\ltwo\,,\quad \forall Q_1,Q_2 \in \Pi_{m,n}
\end{equation*}
Given the matrix representations $\D_\beta$, $\D_\beta^*= W_{m,n}^{-1}\D_{\beta}^T \W_{m,n}
$ from Proposition~\ref{prop:adjoint}, $\J^*$ from Eq.~\eqref{eq:Jstar} and its formal dual
\begin{equation*}
    \J^* = \Big(\sum_\muik{\beta} \D_\beta^* \D_\beta\Big)^{-1}\,, \quad \underline{\J}^* = \Big(\sum_\muik{\beta} \D_\beta \D_\beta^*\Big)^{-1}\,,
\end{equation*}
we introduce
\begin{equation*}
\W_{m,n,k} = \W_{m,n} {\J^*}^{-1}\,, \W_{m,n,-k} = \W_{m,n} \J^*\,, \quad
 \underline{\W}_{m,n,k} = \W_{m,n} \underline{\J}^{*-1}\,,
 \underline{\W}_{m,n,-k} =\W_{m,n} \underline{\J}^*\,,
\end{equation*}
and for $f,g \in \Pi_{m,n}$ and their dual distributions $F = \langle f, \cdot\rangle_\ltwo$, $G = \langle g, \cdot\rangle_\ltwo$ we set
\begin{alignat}{5}
    &\langle f,g\rangle_{\hk} &=& \sum_{\beta \in \N^m, \|\beta\|_1\leq k} \langle D_{\beta}f, D_{\beta}g\rangle_\ltwo &=& \langle \mathfrak{f}, \W_{m,n,k}  \mathfrak{g}  \rangle & \nonumber \\
 &\langle f,g\rangle_{\hk,*} &=& \sum_{\beta \in \N^m, \|\beta\|_1\leq k} \langle D_{\beta}^*f, D_{\beta}^*g\rangle_\ltwo &=& \langle \mathfrak{f}, \underline{\W}_{m,n,k}  \mathfrak{g}  \rangle &  \nonumber\\
        &\langle F,G\rangle_{\hmk} &=& \sum_{\beta \in \N^m, \|\beta\|_1\leq k} \langle D_{\beta} J^*f, D_{\beta} J^*g\rangle_\ltwo  &=& \langle \mathfrak{f}, \W_{m,n,-k}  \mathfrak{g}  \rangle & \nonumber \\
        &\langle F,G\rangle_{\hmk,*} &=& \sum_{\beta \in \N^m, \|\beta\|_1\leq k} \langle D_{\beta}^* J^*f, D_{\beta}^* J^*g\rangle_\ltwo  &=& \langle \mathfrak{f}, \underline{\W}_{m,n,-k}  \mathfrak{g}  \rangle\,, & \label{eq:SC}
\end{alignat}
where $\mathfrak{f} = (f(p_{\alpha}))_{\alpha \in A_{m,n}} \in \R^{|A_{m,n}|}$, $\mathfrak{g} = (g(p_{\alpha}))_{\alpha \in A_{m,n}} \in \R^{|A_{m,n}|}$ are the evaluation vectors of  $f,g$ in the Legendre nodes $p_\alpha \in P_{m,n}$, respectively.
The corresponding norms are given by
\begin{alignat}{4}
\|f\|_{\hk} &=  \langle f,f\rangle_{\hk}^{1/2}\,, & \|f\|_{\hk,*} &=  \langle f,f\rangle_{\hk,*}^{1/2}\nonumber\\
\|F\|_{\hmk}&= \langle F,F\rangle_{\hmk}^{1/2}\,, & \quad \|F\|_{\hmk,*} &=  \langle F,F\rangle_{\hmk,*}^{1/2}\,.
\end{alignat}
\end{definition}
In fact, while including the $L^2$-inner product for $\beta =0$, the expressions above define inner products and norms.
We deduce the exactness of the equations.

\begin{theorem}[Sobolev cubatures]\label{theo:CUB}
Let $f, g \in \Hk$ and $F = \langle f,\cdot\rangle, G = \langle g,\cdot\rangle \in \Hmk$.
Then
the approximations
given by Definition~\ref{def:dual}, Eq.~\eqref{eq:SC},
are exact for all $f,g \in \Pi_{m,n}$.
\end{theorem}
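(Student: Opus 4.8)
The plan is to verify each of the four identities in Eq.~\eqref{eq:SC} in turn, reducing each to the already-established Gauss--Legendre exactness Eq.~\eqref{eq:L2} together with Propositions~\ref{prop:adjoint} and \ref{prop:Hmk} and Theorem~\ref{theo:Jstar}. The overarching observation is that for $f,g\in\Pi_{m,n}$, the representations $\D_\beta,\D_\beta^*$ are \emph{exact} matrix representations of $D_\beta,D_\beta^*$ restricted to $\Pi_{m,n}$ (they map evaluation vectors to evaluation vectors), so every step amounts to rewriting an $L^2$-inner product of two polynomials and invoking Eq.~\eqref{eq:L2}. One subtlety to flag at the outset: for $D_\beta$ with $\|\beta\|_1\leq k$ and $f,g\in\Pi_{m,n}$ the products $D_\beta f\cdot D_\beta g$ need \emph{not} lie in $\Pi_{m,2n+1}$ (each factor has degree $\leq n$, so the product has degree $\leq 2n < 2n+1$ — actually it is fine), so exactness of Eq.~\eqref{eq:Gauss} does apply; I would make this degree count explicit since it is the linchpin.

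First, for the $\hk$-identity: expand $\langle f,g\rangle_{\hk}=\sum_{\|\beta\|_1\leq k}\langle D_\beta f, D_\beta g\rangle_\ltwo$. Since $D_\beta f, D_\beta g\in\Pi_{m,n}$, Eq.~\eqref{eq:L2} gives $\langle D_\beta f,D_\beta g\rangle_\ltwo=\langle \D_\beta\mathfrak f,\W_{m,n}\D_\beta\mathfrak g\rangle=\mathfrak f^\top\D_\beta^\top\W_{m,n}\D_\beta\mathfrak g$. Using $\D_\beta^*=\W_{m,n}^{-1}\D_\beta^\top\W_{m,n}$ from Proposition~\ref{prop:adjoint}, this equals $\mathfrak f^\top\W_{m,n}\D_\beta^*\D_\beta\mathfrak g=\langle\mathfrak f,\W_{m,n}\D_\beta^*\D_\beta\mathfrak g\rangle$. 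Summing over $\beta$ and using $\J^{*-1}=\sum_{\|\beta\|_1\leq k}\D_\beta^*\D_\beta$ from Theorem~\ref{theo:Jstar} yields $\langle f,g\rangle_{\hk}=\langle\mathfrak f,\W_{m,n}\J^{*-1}\mathfrak g\rangle=\langle\mathfrak f,\W_{m,n,k}\mathfrak g\rangle$. The $\hk,*$-identity is identical with the roles of $\D_\beta$ and $\D_\beta^*$ swapped, using $\underline\J^{*-1}=\sum\D_\beta\D_\beta^*$.

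For the negative-order identities, the extra ingredient is Proposition~\ref{prop:Hmk}, which identifies $\|F\|_\Hmk=\|j^*f\|_\hk$ and more generally $\langle F,G\rangle_{\hmk}=\langle j^*f,j^*g\rangle_\hk$. On the polynomial level $j^*$ truncates to the matrix $\J^*$ (Theorem~\ref{theo:Jstar}), so the evaluation vector of $J^*f$ is $\J^*\mathfrak f$. Applying the already-proven $\hk$-identity to $J^*f,J^*g\in\Pi_{m,n}$ gives $\langle F,G\rangle_{\hmk}=\langle\J^*\mathfrak f,\W_{m,n,k}\J^*\mathfrak g\rangle=\langle\mathfrak f,\J^{*\top}\W_{m,n}\J^{*-1}\J^*\mathfrak g\rangle$. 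Here I would use that $\J^*=(\sum\D_\beta^*\D_\beta)^{-1}$ is $\W_{m,n}$-self-adjoint — i.e.\ $\W_{m,n}\J^*=\J^{*\top}\W_{m,n}$, which follows because $\D_\beta^*\D_\beta$ is $\W_{m,n}$-self-adjoint (a direct consequence of $\D_\beta^*=\W_{m,n}^{-1}\D_\beta^\top\W_{m,n}$) — to collapse the product to $\W_{m,n}\J^*=\W_{m,n,-k}$, giving $\langle F,G\rangle_{\hmk}=\langle\mathfrak f,\W_{m,n,-k}\mathfrak g\rangle$. The $\hmk,*$-case runs the same way with $\underline\J^*$ and the underlined weight matrices, using the $\hk,*$-identity in place of the $\hk$-identity.

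The main obstacle is bookkeeping rather than conceptual: one must be careful that $\J^*$ genuinely represents the truncation $j^*$ on evaluation vectors (so that $J^*f$ has evaluation vector $\J^*\mathfrak f$) and that the various weight matrices are symmetric/self-adjoint in the right pairing, so that transposes can be moved through $\W_{m,n}$ cleanly; this is where a slip would most easily occur. It is also worth remarking explicitly why the bilinear forms in Definition~\ref{def:dual} are genuine inner products: positive-definiteness comes from including $\beta=0$ (the $L^2$-term) and from $\W_{m,n}\succ0$ together with invertibility of $\J^*,\underline\J^*$, so each $\W_{m,n,\pm k}$, $\underline\W_{m,n,\pm k}$ is a product of a positive-definite matrix with a $\W_{m,n}$-self-adjoint positive-definite one, hence induces a positive-definite form — I would include a one-line justification of this at the end.
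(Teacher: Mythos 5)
Your proof is correct and follows essentially the same route as the paper, whose own proof is a one-line appeal to Proposition~\ref{prop:Hmk}, Theorem~\ref{theo:Jstar} and the projection identity; you simply carry out the computation those references gesture at, reducing each of the four identities in Eq.~\eqref{eq:SC} to Gauss--Legendre exactness on $\Pi_{m,2n+1}$ via $\D_\beta^*=\W_{m,n}^{-1}\D_\beta^\top\W_{m,n}$. The details you add beyond the paper's sketch --- the degree count $\deg(D_\beta f\cdot D_\beta g)\le 2n$ and the $\W_{m,n}$-self-adjointness of $\J^*$ and $\underline{\J}^*$ needed to collapse $\J^{*\top}\W_{m,n}$ to $\W_{m,n}\J^*$ --- are exactly the right ones and are all verified correctly.
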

\begin{proof} By combining Proposition~\ref{prop:Hmk}, Theorem~\ref{theo:Jstar} and  $\Ic_{m,n}(\pi_{m,n}(f)) = \pi_{m,n}(f)$ the proof follows.

\end{proof}

The following observation is helpful for computing the Sobolev cubatures.

\begin{corollary}\label{cor:test} Let  $f \in \Pi_{m,n}$ and the assumptions of
Definition~\ref{def:dual} be fulfilled. Then the following identities hold:
\begin{align}
    \langle D_\beta f , D_\beta f \rangle_\Ltwo &=  \sum_{\alpha \in A_{m,n}} \frac{1}{w_\alpha} \langle D_\beta f,  L_\beta \rangle_\Ltwo^2 \nonumber\\
    \langle D^*_\beta f , D^*_\beta f \rangle_\Ltwo &= \sum_{\alpha \in A_{m,n}} \frac{1}{w_\alpha} \langle  f,  D_\beta L_\alpha \rangle_\Ltwo^2 \label{eq:dual_L}
\end{align}
\end{corollary}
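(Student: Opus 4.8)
The plan is to establish both identities by the same mechanism: rewrite the relevant $L^2$-inner product of polynomials using the Gauss--Legendre cubature of Eq.~\eqref{eq:L2}, and then recognise the resulting sum as an expansion in the orthogonal Lagrange basis. First I would note that for $f \in \Pi_{m,n}$ and $\beta$ with $\|\beta\|_1 \le k$, the polynomial $D_\beta f$ lies in $\Pi_{m,n}$ as well, since $\D_\beta$ maps $\Pi_{m,n}$ to $\Pi_{m,n}$ by construction (Eq.~\eqref{eq:DI}--\eqref{eq:Dbeta}). Hence $D_\beta f$ admits the Lagrange expansion $D_\beta f = \sum_{\alpha \in A_{m,n}} (D_\beta f)(p_\alpha) L_\alpha$, and by orthogonality $\langle D_\beta f, L_\alpha\rangle_{\ltwo} = w_\alpha (D_\beta f)(p_\alpha)$, so that $(D_\beta f)(p_\alpha) = \frac{1}{w_\alpha}\langle D_\beta f, L_\alpha\rangle_{\ltwo}$. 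Substituting this into the exact cubature $\langle D_\beta f, D_\beta f\rangle_{\ltwo} = \sum_{\alpha} w_\alpha (D_\beta f)(p_\alpha)^2$, which is valid because $(D_\beta f)^2 \in \Pi_{m,2n}$, yields $\sum_\alpha w_\alpha \cdot \frac{1}{w_\alpha^2}\langle D_\beta f, L_\alpha\rangle_{\ltwo}^2 = \sum_\alpha \frac{1}{w_\alpha}\langle D_\beta f, L_\alpha\rangle_{\ltwo}^2$, which is the first identity (modulo the index on $L$, which I read as a typo for $L_\alpha$).

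For the second identity I would apply the definition of the adjoint. Since $D_\beta^* f \in \Pi_{m,n}$ too, the same argument gives $\langle D_\beta^* f, D_\beta^* f\rangle_{\ltwo} = \sum_\alpha \frac{1}{w_\alpha}\langle D_\beta^* f, L_\alpha\rangle_{\ltwo}^2$. Now I use the adjoint relation from Definition~\ref{def:dual}, namely $\langle D_\beta^* f, L_\alpha\rangle_{\ltwo} = \langle f, D_\beta L_\alpha\rangle_{\ltwo}$ — here one must check that $L_\alpha \in \dom(D_\beta) \cap \Pi_{m,n}$, which holds since $L_\alpha \in \Pi_{m,n} \subseteq C^\infty$ and the truncated adjoint relation $\langle D_\beta Q_1, Q_2\rangle_{\ltwo} = \langle Q_1, D_\beta^* Q_2\rangle_{\ltwo}$ holds for all $Q_1, Q_2 \in \Pi_{m,n}$. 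Substituting gives $\langle D_\beta^* f, D_\beta^* f\rangle_{\ltwo} = \sum_\alpha \frac{1}{w_\alpha}\langle f, D_\beta L_\alpha\rangle_{\ltwo}^2$, which is Eq.~\eqref{eq:dual_L}.

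The only subtle point — and the step I expect to require the most care — is the degree bookkeeping for the cubature: the exact Gauss--Legendre rule of Eq.~\eqref{eq:Gauss} integrates polynomials up to degree $2n+1$ exactly, and more precisely Eq.~\eqref{eq:L2} asserts exactness of $\langle Q_1, Q_2\rangle_{\ltwo}$ for $Q_1, Q_2 \in \Pi_{m,n}$. Applying this to $Q_1 = Q_2 = D_\beta f$ (and to $Q_1 = D_\beta^* f$, $Q_2 = L_\alpha$, etc.) is legitimate precisely because each factor lies in $\Pi_{m,n}$; I would make this explicit rather than appeal vaguely to "degree $2n+1$''. Everything else is a routine substitution using the orthogonality $\langle L_\alpha, L_\gamma\rangle_{\ltwo} = w_\alpha \delta_{\alpha,\gamma}$. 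Thus the corollary is a direct consequence of Eq.~\eqref{eq:L2}, Eq.~\eqref{eq:GLW}, and the adjoint identity of Definition~\ref{def:dual}, and no new machinery beyond Theorem~\ref{theo:CUB} is needed.
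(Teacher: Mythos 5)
Your proof is correct and is essentially the paper's argument in different notation: the paper carries out the same computation in matrix/evaluation-vector form via $\D_\beta^* = \W_{m,n}^{-1}\D_\beta^\top\W_{m,n}$, while you phrase it as Parseval's identity for the orthogonal Lagrange basis (exact by the Gauss--Legendre cubature) followed by the adjoint relation $\langle D_\beta^* f, L_\alpha\rangle_{\ltwo} = \langle f, D_\beta L_\alpha\rangle_{\ltwo}$. Your explicit degree bookkeeping and the note that $L_\beta$ should read $L_\alpha$ are both sound.
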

\begin{proof}
We use Proposition~\ref{prop:adjoint} in terms of  $\D^*_\beta = \W_{m,n}^{-1}\D_{\beta}^T\W_{m,n}$ and due to Theorem~\ref{theo:CUB} compute
    \begin{align*}
            \langle D^*_\beta f , D^*_\beta f \rangle_\Ltwo & = \langle \D_\beta^*\mathfrak{f}, \W_{m,n}\D_\beta^*\mathfrak{f}  \rangle = \langle \W_{m,n}^{-1}\D_\beta^\top \W_{m,n} \mathfrak{f}, \D_\beta^\top \W_{m,n}\mathfrak{f}  \rangle \\
            &= \sum_{\alpha \in A_{m,n}} \frac{1}{w_\alpha} \langle \mathfrak{f},   \D_\beta^\top \W_{m,n} e_\alpha \rangle^2  = \sum_{\alpha \in A_{m,n}} \frac{1}{w_\alpha} \langle f, D_\beta L_\alpha \rangle_\Ltwo^2\,,
    \end{align*}
    where $e_\alpha$ is the $\alpha$-th standard basis vector of $\R^{|A_{m,n}|}$.
The analog computation applies for $D_\beta$.
\end{proof}

In fact, when considering the truncated (dual) norms ($\|\cdot\|_{\hmk,*}$, $\|\cdot\|_{\hk,*}$), $\|\cdot\|_{\hmk}$, $\|\cdot\|_{\hk}$, computations based on Eq.~\eqref{eq:dual_L} are straightforwardly achieved and documented in \cite{PDE_repo}. We provide the formal setup next.

\section{PDE formulations}
In light of the provided perspectives, we follow \cite{jost2002,brezis2011} to propose the following formalization of classic PDE problems. For the sake of simplicity, we focus on  classic \emph{Poisson type equations}. Extensions to more general PDE problems can be derived  once the notion is given, see Section~\ref{sec:Num}.

\subsection{Poisson equation}
Let us consider the Poisson equation, for $f \in C^0(\Omega,\R)$. The \emph{strong Poisson problem} with Dirichlet boundary condition $g \in C^0(\partial \Omega,\R)$ seeks for solutions $u\in C^2(\Omega,\mathbb{R})$ fulfilling:
\begin{equation}\label{eq:str_P}
   \li\{ \begin{array}{rll}
       -\Delta u(x) -f(x) &= 0  &,  \forall x\in\Omega  \\
         u(x)  -g(x)      &= 0  &,  \forall x\in\partial\Omega\,.
\end{array}\re.
\end{equation}
By using the notion of weak derivatives we can formulate a \emph{weaker version of the Poisson equation}. That is, finding  $u\in H^2(\Omega,\mathbb{R})\subseteq C^0(\Omega,\R)$ fulfilling
\begin{equation}\label{eq:var_form}
    \int\limits_{\Omega}(-\Delta u-f)\phi\textrm{ d}x, \textrm{  }\forall \phi \in C^{\infty}(\Omega,\mathbb{R}),
\end{equation}
subjected to the same Dirichlet boundary conditions as in equation \eqref{eq:str_P}. The notions give rise to the following  optimisation problems.

\subsection{PDE loss}\label{sec:PDE_L}

We use the Sobolev space setting $H^k(\Omega,\R)$, $H^l(\partial\Omega,\R)$, $k,l \in \Z$ for introducing soft-constrained PDE-losses that impose the Poisson-PDE-solution with general boundary condition as one \emph{global variational optimisation problem}.
\begin{definition}\label{def:loss}
    Given the setup of Eq.~\eqref{eq:str_P} the  strong PDE-loss $\Lc_{\mathrm{strong}} : H^{k+2}(\Omega,\R)\cap H^l(\partial\Omega,\R)\lo \R$, $k, l \in\N$  is defined  by
\begin{equation}\label{eq:sL}
    \Lc_{\mathrm{strong}}(u) = r_{\mathrm{strong}}(u) + s_{\mathrm{strong}}(u)
   = \|-\Delta u -f \|^2_{\hk} + \| u_{| \partial \Omega} -g \|^2_{\ltwo}\,.
\end{equation}
The weak PDE-loss $\Lc_{\mathrm{weak}} : H^{k+2}(\Omega,\R)\cap H^l(\partial\Omega,\R)\lo \R$, reflecting the
weak formulation in Eq.~\eqref{eq:var_form}, is given by
\begin{alignat}{1}
\Lc_{\mathrm{weak}}(u) &= r_{\mathrm{weak}}(u) + s_{\mathrm{weak}}(u) \nonumber \\
&=  \sup_{\phi \in C^\infty(\Omega,\R)} \li <-\Delta u-f,\phi \re>^2_{\hk} +  \sup_{\phi \in C^{\infty}(\partial\Omega,\R)} \li <u-g,\phi \re>^2_{\ltwo}\,.  \label{eq:wL}
\end{alignat}
\end{definition}
Truncations of the the strong loss $\Lc_{\mathrm{strong}}:\Pi_{m,n} \lo \R^+$ can be derived by applying the Sobolev cubatures from Definition~\ref{def:dual}. A truncation  $\Lc_{\mathrm{weak}}:\Pi_{m,n} \lo \R^+$
of the weak PDE-loss, Eq.~\eqref{eq:wL} is given by requiring Eq.~\eqref{eq:var_form} to be fulfilled only for all polynomial test functions $\varphi\in \Pi_{m,n} = \mathrm{span}(L_{\alpha})_{\alpha \in A_{m,n}}$ spanned by the Lagrange polynomials. Hence, we consider
\begin{equation}
 r_{\mathrm{weak}}(u) \approx
\sum_{\alpha\in A_{m,n}} \li <-\Delta u-f,L_\alpha \re>^2_{H^k(\Omega)}\,, \quad
s_{\mathrm{weak}}(u) \approx \sum_{\alpha\in A_{m,n}} \li <u-g,L_\alpha \re>^2_{H^{l}(\Omega)}\,. \label{eq:awL}
\end{equation}

While Definition~\ref{def:loss} includes the case $k,l <0$ the corresponding losses occur when replacing $\|\cdot\|_{\hk}, \|\cdot\|_{\hmk}$ with $\|\cdot\|_{\hk}$, $\|\cdot\|_{\hmk,*}$, yielding well-defined notions due to Proposition~\ref{prop:Hmk}. Next, we derive the corresponding gradient flows of the given losses.

\subsection{Variational gradient flows}
Given  a polynomial $Q_{C_0} = \sum_{\alpha \in A_{m,n}}c_\alpha L_\alpha$ in Lagrange expansion with respect to the Legendre grid $P_{m,n} \subseteq \Omega$ with  coefficients $C_0 =(c_\alpha)_{\alpha\in A_{m,n}}\in\R^{|A_{m,n}|}$. We consider the truncated loss
\linebreak
$\Lc : \R^{|A_{m,n}|} \lo \R^+$, $\Lc=\Lc[C]$ acting on the coefficients and the \emph{gradient flow ODE}
\begin{equation}\label{eq:ODE}
 \partial_t C(t) = - \nabla \Lc(Q_{C(t)})      \quad  \,, C(0)= C_0\,.
\end{equation}
Combining the identity $Q_C(p_{\alpha}) =c_\alpha$, with Definition~\ref{def:dual} for the evaluation vector
$\mathfrak{f}=(f(p_{\alpha}))_{\alpha\in A_{m,n}}$ we derive the following expression for the $L^2$-gradient in case for the strong loss $\Lc =\Lc_{\mathrm{strong}}$ from Eq.~\eqref{eq:sL},i.e,
\begin{align*}
    \nabla_C(r_{\textrm{strong}}) &= \nabla_C\langle\big((\D_{x_1}^2+\cdots+\D_{x_m}^2)C+\mathfrak{f}\big), W_{m,n}\big((\D_{x_1}^2+\cdots+\D_{x_m}^2)C+\mathfrak{f}\big)\rangle\,,
\end{align*}
where according to Eq.~\eqref{eq:Dbeta}, $\D_{x_i}^2 = \D_{2e_i}$ with $e_i \in \R^m$ being the standard basis, $i=1,\dots,m$. Thus,
\begin{align}
\nabla_C(r_{\textrm{strong}}) &= -2(\D_{x_1}^2+ \cdots + \D_{x_m}^2)^T\W_{m,n}\big((\D_{x_1}^2+\cdots+\D_{x_m}^2)C+\mathfrak{f}\big)\,, \label{eq:gradS}\\
 \nabla_C( s_{\textrm{strong}})^\pm_j &= 2\W_{m-1,n}(\mathbb{S}_{m,n,j}^\pm C-\mathfrak{g^\pm_j})\,, \quad j =1,\dots,m\,,\nonumber
\end{align}
where $\mathfrak{g^\pm_j}$ is the evaluation vector of $g$ in the $\mbox{m-1}$-dimensional Legendre grid $P_{m-1,n,j}^\pm \subseteq \partial \Omega_j^\pm$ contained in each face  $\partial \Omega_j^\pm$ of $\Omega$, and $\mathbb{S}_{m,n,j}^\pm$ denotes the truncated trace operator, Definition~\ref{def:TR}.

Analogously, in case of the weak loss
$\Lc =\Lc_{\mathrm{weak}}$ from Eq.~\eqref{eq:wL} we derive
\begin{align}
\nabla_C(r_{\textrm{weak}}) &= -2(\D_{x_1}^2+ \cdots \D_{x_m}^2)^T\W_{m,n}^2\big((\D_{x_1}^2+\cdots+\D_{x_m}^2)C+\mathfrak{f}\big) \label{eq:gradW}\\
 \nabla_C( s_{\textrm{weak}})^\pm_j &= 2\W_{m-1,n}^2(\mathbb{S}_{m,n,j}^\pm C-\mathfrak{g^\pm_j})\,.\nonumber
\end{align}
Formulas for choosing truncated dual norms $\|\cdot\|_\hk$, $\|\cdot\|_{\hk,*}$, $0<k<\infty$ as in Definition~\ref{def:dual} result when replacing
$\W_{m,n}$ with the corresponding cubature matrix, e.g.  $\W_{m,n}\J^{*-1}$,
from Definition~\ref{def:dual} in Eq.~\eqref{eq:gradS}, while  in Eq.~\eqref{eq:gradW} $\W_{m,n}^2\J^{*-1}$ occurs.

For all cases, Corollary~\ref{cor:test} provides the baseline for numerical stable implementations, which are realised and documented in \cite{PDE_repo}.

\subsubsection{Analytic variation of linear PDEs}\label{sec:AD}

Given the analytic expressions of the variational gradients in Eq.~\eqref{eq:gradS},\eqref{eq:gradW} we derive the analytic solution of the gradient descent, Eq.~\eqref{eq:ODE}:
To do so, we shorten $\D:= (\D_{x_1}^2+\cdots+\D_{x_m}^2)$,  $\D^*:=\D^T\mathbb{W}_{m,n}$, $\mathbb{S}:= \sum_{j=1}^m S_{m,n,j}^\pm$, $\mathbb{S}^*\mathfrak{g}:=\mathbb{W}_{m-1,n}\sum_{j=1}^m \mathfrak{g}^\pm_j$
and realise that  Eq.~\eqref{eq:ODE} becomes:
\begin{equation*}
\frac{d}{dt}C(t)  = -2(\D^* \D +\mathbb{S}^*\mathbb{S}) C(t)+2(\mathbb{S}^*\mathfrak{g}-\D^*\mathfrak{f})\,.
\end{equation*}
By applying the variation of parameters we derive the solution of the ODE as:
\begin{equation*}
C(t) = \mathrm{exp}(-t\cdot\K^*\K)C_0+2(\mathbb{I}-\mathrm{exp}(-t\cdot\K^*\K))(\K^*\K)^{+}(\mathbb{S}^*\mathfrak{g}-\D^*\mathfrak{f})\,,
\end{equation*}
where $\K^*\K := 2(\D^*\D + \mathbb{S}^*\mathbb{S})$, and $(\K^*\K)^{+}$ denotes the \emph{Moore--Penrose pseudo-left-inverse}, see e.g., \cite{ben2003,Lloyd_Num}. In case, where $\K^*\K$ is a positive definite matrix that imples
\begin{equation}\label{eq:AS}
C_\infty:=\lim_{t\rightarrow \infty}C(t)= (\K^*\K)^{-1}(\mathbb{S}^*\mathfrak{g}-\D^*\mathfrak{f})\,.
\end{equation}
While we expect that $\K^*\K$ is positive definite, and thus invertible, whenever the underlying PDE problem is well posed and posses a unique solution a formal proof of this implication requires a deeper theoretical study that is out of scope of this article. Empirical demonstrations in Section~\ref{sec:Num}, however, suggest this expectation to be genuine.

Whatsoever, non-linear PDEs or inverse PDE problems can not be solved due to Eq.~\eqref{eq:AS} and require gradient descent methods, realising Eq.~\eqref{eq:ODE}. A deeper investigation of such approaches is given in the next section.

\subsection{Exponential convergence of  $\lambda$-convex gradient flows}\label{sec:AED}

In practice more general problems than  linear (forward) PDE problems occur.
We motivate this section by considering an inverse problem for the Poisson equation \eqref{eq:str_P}. That is to consider a function $f: \Omega \lo \R$ and an unknown parameter
$\mu \in \R$ and pose the PDE problem
\begin{equation}\label{eq:inv}
   \li\{ \begin{array}{rll}
       -\Delta u(x) -\mu f(x)&= 0  &,  \forall x\in\Omega  \\
         u(x)  -g(x)      &= 0  &,  \forall  x\in\Omega
\end{array}\re.
\end{equation}
where $g$ is one specific Poisson solution, i.e., $\Delta g =  \mu f$ on $\Omega$. For inferring the parameter $\mu\in \mathbb{R}$ and the PDE solutions simultaneously  we assume that $g$ can be sampled at the Legendre grid $P_{m,n}$ and formulate the truncated (polynomial) loss by:
\begin{equation}\label{eq:IP_ref}
    \mathcal{L}[C,\mu] = \|-\Delta Q_C -\mu f\|^2_{H^k(\Omega)} +  \|Q_C -g\|^2_{H^l(\Omega)}\,, \quad k,l \in \N\,.
\end{equation}
While the PDE solution depends on $\mu$ itself, we cannot compute  the analytic solution directly. Instead, we apply an iterative gradient descent for deriving the solution based on Eq.~\eqref{eq:IP_ref}. We prove that the proposed approach converges  exponentially fast for even more general problems.
\begin{definition}
A differentiable functional $\mathcal{F}: \mathbb{R}^{|A_{m,n}|}\rightarrow \mathbb{R}$ is called $\lambda$-convex if there is a $\lambda >0$ such that:
\begin{equation}\label{eq:strong_conv}
    \mathcal{F}[x]\geq \mathcal{F}[y]+\nabla \mathcal{F}[y]^T(x-y)+\frac{\lambda}{2}\|x-y\|^2, \forall x,y\in \mathbb{R}^{|A|}
\end{equation}
\end{definition}
\begin{theorem}\label{thm_conv} Given a truncated loss
$\Lc : \R^{|A_{m,n}|}\lo \R^+$, $m,n \in \N$, as in Section~\ref{sec:PDE_L}, that is $\lambda$-convex and differentiable and
assume that the optimal solution $C_\infty := \mathrm{argmin}_{C\in \mathbb{R}^{|A_{m,n}|}} \Lc[C]$ minimizing the variational problem
exists and is unique. Then both the loss and the gradient descent
\begin{equation*}
 \partial_t C(t) = - \nabla \Lc(Q_{C(t)})      \quad  \,, C(0)= C_0\,.
\end{equation*}
converge exponentially fast as $t\rightarrow \infty$:
\begin{equation}
    \frac{\lambda}{2}\|C(t)-C_\infty \|^2 \leq \Lc[C(t)]-\Lc[C_\infty]\leq e^{-2\lambda t}(\Lc[C_0]-\Lc[C_\infty]).
\end{equation}
\end{theorem}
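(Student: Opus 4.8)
The plan is to prove the two inequalities separately. The left-hand bound is an immediate consequence of $\lambda$-convexity evaluated at the minimiser, while the right-hand bound follows from a Gr\"onwall-type argument for the energy $E(t):=\Lc[C(t)]-\Lc[C_\infty]$, once we extract a Polyak--\L ojasiewicz (PL) inequality from $\lambda$-convexity.

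First I would record two elementary facts from the hypotheses: since $C_\infty$ minimises the differentiable functional $\Lc$ over all of $\R^{|A_{m,n}|}$, we have $\nabla\Lc[C_\infty]=0$ and $E(t)=\Lc[C(t)]-\Lc[C_\infty]\ge 0$ for all $t$. Substituting $x=C(t)$, $y=C_\infty$ into~\eqref{eq:strong_conv} and using $\nabla\Lc[C_\infty]=0$ gives $\Lc[C(t)]\ge\Lc[C_\infty]+\tfrac{\lambda}{2}\|C(t)-C_\infty\|^2$, which is precisely the left-hand inequality (and, combined with $E\ge0$, also the bound $\|C(t)-C_\infty\|\le\sqrt{2E(0)/\lambda}$ we use below).

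Next I would derive the PL inequality. Swapping the roles, i.e. taking $x=C_\infty$ and $y=C(t)$ in~\eqref{eq:strong_conv}, yields $\Lc[C_\infty]\ge\Lc[C(t)]+\nabla\Lc[C(t)]^{T}(C_\infty-C(t))+\tfrac{\lambda}{2}\|C_\infty-C(t)\|^{2}$. Viewing the right-hand side as a quadratic in the vector $v:=C_\infty-C(t)$, it is minimised at $v=-\tfrac1\lambda\nabla\Lc[C(t)]$ with value $\Lc[C(t)]-\tfrac1{2\lambda}\|\nabla\Lc[C(t)]\|^{2}$, so $\|\nabla\Lc[C(t)]\|^{2}\ge 2\lambda\big(\Lc[C(t)]-\Lc[C_\infty]\big)=2\lambda E(t)$. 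Differentiating $E$ along the flow via the chain rule and the ODE $\dot C=-\nabla\Lc[C]$ gives $E'(t)=\nabla\Lc[C(t)]^{T}\dot C(t)=-\|\nabla\Lc[C(t)]\|^{2}\le-2\lambda E(t)$; integrating this differential inequality (Gr\"onwall) produces $E(t)\le e^{-2\lambda t}E(0)$, which is the right-hand inequality, and the chain is complete.

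The computations above are routine; the only point that needs genuine care is ensuring the trajectory $C(t)$ exists and is $C^1$ on all of $[0,\infty)$ so that the differentiation of $E$ is legitimate. Continuity of $\nabla\Lc$ (which holds since $\Lc$ is differentiable in the setting of Section~\ref{sec:PDE_L}) gives local existence of a solution through $C_0$; the a priori bound $\|C(t)-C_\infty\|\le\sqrt{2E(0)/\lambda}$ from the first step confines the trajectory to a bounded set and rules out finite-time blow-up, so the local solution extends globally. With this in hand the argument closes exactly as sketched.
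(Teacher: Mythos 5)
Your proof is correct, but it takes a genuinely different route from the paper's. You work entirely in continuous time: you extract the Polyak--\L ojasiewicz inequality $\|\nabla\Lc[C(t)]\|^{2}\ge 2\lambda\bigl(\Lc[C(t)]-\Lc[C_\infty]\bigr)$ by minimising the quadratic lower bound from $\lambda$-convexity over the displacement vector, then differentiate the energy $E(t)=\Lc[C(t)]-\Lc[C_\infty]$ along the flow and close with Gr\"onwall; the left-hand inequality you get in one line from $\nabla\Lc[C_\infty]=0$. The paper instead passes through the \emph{implicit Euler} discretisation $C_{n+1}=C_n-\tau\nabla\Lc[C_{n+1}]$: it proves the same PL-type bound (its Lemma~\ref{lemma_1}), derives the per-step contraction $\Lc[C_{n-1}]-\Lc[C_\infty]\ge(1+\lambda\tau)^2(\Lc[C_n]-\Lc[C_\infty])$ (Lemma~\ref{lemma_nd}), iterates to get the rate $e^{-2\hat\lambda\tau n}$ with $\hat\lambda=\tfrac1\tau\log(1+\lambda\tau)$ (Lemma~\ref{lemma_f}), and recovers the continuous statement by letting $\tau\to0$. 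Your argument is shorter and avoids the limit step, and you are the only one to address well-posedness of the trajectory (global existence via the a priori bound), which the paper leaves implicit; the paper's detour buys an explicit convergence rate for the implicit Euler scheme itself, which is what is actually implemented and is recorded in its closing remark. Two small points of care in your write-up: the a priori bound $\|C(t)-C_\infty\|\le\sqrt{2E(0)/\lambda}$ needs $E(t)\le E(0)$, i.e.\ the monotonicity $E'=-\|\nabla\Lc\|^2\le0$ that you only establish in the following step (harmless, but state the order correctly), and for uniqueness of the flow, as opposed to mere existence, you would want $\nabla\Lc$ locally Lipschitz — automatic here since the truncated losses of Section~\ref{sec:PDE_L} are polynomial in $C$.
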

\begin{proof}
The proof of the statement is given in the appendix.
\end{proof}

We give some insights to assert in which situations Theorem~\ref{thm_conv} applies:
\begin{proposition}\label{prop:LA} Let $A\in \mathbb{R}^{r\times s}$, $r \geq s \in \N$ be a positive definite matrix, $\lambda>0$ be the smallest eigenvalue of $A$ then the affine loss
\begin{equation}\label{eq:AL}
    \Lc(C) = \|AC+b\|^2\,, \quad b \in \R^{r}
\end{equation}
is $\lambda$-convex.
\end{proposition}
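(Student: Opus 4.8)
The plan is to compute the gradient and show that the Hessian of $\Lc$ is bounded below by $\lambda \mathbb{I}$, which is exactly the second-order characterisation of $\lambda$-convexity for a $C^2$ functional. First I would expand the square: writing $\Lc(C) = \|AC+b\|^2 = (AC+b)^\top(AC+b) = C^\top A^\top A C + 2 b^\top A C + b^\top b$, so that $\Lc$ is a quadratic form in $C$. Differentiating gives $\nabla \Lc(C) = 2A^\top A C + 2 A^\top b = 2A^\top(AC+b)$, and the Hessian is the constant symmetric matrix $\nabla^2 \Lc(C) = 2 A^\top A$.

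Next I would verify the defining inequality \eqref{eq:strong_conv} directly using the exact second-order Taylor expansion, which holds with no remainder because $\Lc$ is quadratic: for all $x,y \in \R^{|A_{m,n}|}$,
\begin{equation*}
    \Lc(x) = \Lc(y) + \nabla\Lc(y)^\top(x-y) + \tfrac{1}{2}(x-y)^\top (2A^\top A)(x-y) = \Lc(y) + \nabla\Lc(y)^\top(x-y) + \|A(x-y)\|^2.
\end{equation*}
So the claim reduces to showing $\|A z\|^2 \geq \tfrac{\lambda}{2}\|z\|^2$ for all $z$, i.e. that the smallest eigenvalue of $A^\top A$ is at least $\lambda/2$. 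Here I need to be slightly careful about what ``positive definite'' and ``smallest eigenvalue $\lambda$ of $A$'' mean for a non-square matrix $A \in \R^{r\times s}$ with $r \geq s$: the natural reading consistent with the rest of the paper (cf.\ the use of $\D_\beta^*\D_\beta$ and $\J^{*-1}$) is that $A$ has full column rank, so $A^\top A$ is symmetric positive definite, and $\lambda$ denotes its smallest eigenvalue. Under that reading $\|Az\|^2 = z^\top A^\top A z \geq \lambda \|z\|^2 \geq \tfrac{\lambda}{2}\|z\|^2$, and the inequality \eqref{eq:strong_conv} follows immediately, establishing $\lambda$-convexity.

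The only real obstacle is this bookkeeping issue about the eigenvalue normalisation: depending on whether $\lambda$ is meant to be the smallest eigenvalue of $A$ (interpreted via $A^\top A$, or via singular values squared) or of $A^\top A$ itself, the constant in the $\lambda$-convexity estimate is $2\lambda$ rather than $\lambda$, or vice versa. Since $\lambda$-convexity is stable under rescaling the constant (any $\lambda' \le \lambda$ also works), this does not affect the truth of the statement, and I would simply fix the convention (take $\lambda$ to be the smallest eigenvalue of $A$ in the sense that $\|Az\|^2 \ge \lambda\|z\|^2$) and note that then $\Lc$ is in fact $2\lambda$-convex, hence $\lambda$-convex. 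Everything else is the routine quadratic-form computation sketched above; no limiting arguments or functional-analytic input are needed since we are entirely in the finite-dimensional space $\R^{|A_{m,n}|}$.
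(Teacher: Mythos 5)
Your proof is correct and follows essentially the same route as the paper: the paper likewise expands $\|Ax+b\|^2$ exactly about $Ay+b$ (phrased as ``$1$-convexity of the norm'' rather than via the Hessian $2A^\top A$), identifies the middle term with $\nabla\Lc(y)^\top(x-y)$, and bounds the remainder $\|A(x-y)\|^2$ below by $\lambda\|x-y\|^2$. Your remark about the eigenvalue convention is well taken --- the paper silently uses $\|Az\|^2\geq\lambda\|z\|^2$, which only holds if $\lambda$ is read as the smallest eigenvalue of $A^\top A$ --- but this does not change the substance of the argument.
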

\begin{proof}
We start by observing that any norm is $1-$convex, in particular it holds:
\begin{equation}
\|x\|^2 = \|y\|^2+(\nabla \|y\|^2)^T(x-y)+\|x-y\|^2\,,
\end{equation}
where $(\nabla\|y\|^2)^T(x-y) = 2\langle y,x-y\rangle$.

By replacing the roles of $x,y$ with $Ax +b$, $Ay+b$, respectively, we compute:
\begin{align*}
    \|Ax + b\|^2 &= \|Ay+b\|^2+2\langle Ay+b,A(x-y)\rangle +\|A(x-y)\|^2
    \\&=\|Ay+b\|^2+2\langle A^T(Ay+b),x-y\rangle+\|A(x-y)\|^2\\
    &\geq \|Ay+b\|^2+2(\nabla(\|Ay+b\|^2),x-y)+\lambda\|x-y\|^2\,,
\end{align*}
where $\nabla(\|Ay+b\|^2) = 2(A^T(Ay+b))$.
\end{proof}
We want to note that the assumption on $A$ in Proposition~\ref{prop:LA} can be relaxed:
\begin{remark}[Exponential convergence of non-unique solutions] Given that $\ker A \neq 0$, but  $b\in \R^r$ in Eq.~\eqref{eq:AL} satisfies $b \in \mathrm{coker} A^T =\{x \in \R^s : A^Tx \neq 0 \}$ we observe that solving $AC =b$ is equivalent to minimising
\begin{equation}\label{eq:WP}
   \Lc(C)= \| A^TA C+ A^Tb\|^2 =  \| A' C+ b'\|^2 \,,
\end{equation}
with $b' = A^Tb$, $A' = A^TA$. Let $\lambda >0$ be the smallest non-vanishing eigenvalue of $A'=A^TA$. While $\mathrm{coker} A^T \cong  \im A$, $\mathcal{L}$ is $\lambda$-convex on $(\ker A)^\perp$. Due to Theorem~\ref{thm_conv} and Proposition~\ref{prop:LA} this implies that the gradient descent of well-posed problems, Eq.~\eqref{eq:WP}, converges exponentially fast to a solution as long as the initial coefficients $C_0=C(0) \not\in \ker A$ were proper chosen.
\end{remark}
The practical relevance of the observation above is part of the empirical demonstrations
of our proposed concepts given in the next section.

 \section{Numerical experiments}
 \label{sec:Num}
We designed several numerical experiments for validating our theoretical results.
The computations of the PSMs were executed on a standard Linux laptop (Intel(R) Core(TM) i7-1065G7 CPU @ 1.30GHz, 32\,GB RAM).
Precomputation of the Sobolev cubature matrices is realised as a feature of the open source package  \cite{minterpy}. The PSMs are realised by Chebyshev polynomials, Eq.~\eqref{eq:ChebP}, constrained on Legendre grids as asserted in Eq.~\eqref{eq:chebTR}. All PINN experiments were executed on the NVIDIA V100 cluster at HZDR.
Complete code and benchmark sets is available at \cite{PDE_repo}. We intensively compared several PINN approaches in our previous work \cite{cardona2022replacing}. That is why, apart from classic PINNs, here, we focus on comparing our approach with the PINN-methods that turned out to be most reliable:
\begin{enumerate}
    \item[i)]  \emph{Classic PINNs} with the strong $L^2$-MSE loss based on \cite{RAISSI2019686}, as described in the introduction.
    \item[ii)]  \emph{Inverse Dirichlet Balancing (ID-PINNs)} with the $L^2$-MSE loss \cite{maddu2021}, as described in the introduction.
    \item[iii)] \emph{Sobolev Cubature PINNs (SC-PINNs)} \cite{cardona2022replacing}, with the weak $L^2$-loss for all the experiments unless specified otherwise.
    \item[iv)] \emph{Gradient flow optimised PSMs (GF-PSM)}, using the LBFGS-optimiser \cite{Byrd} for the forward problem with the $H^{-1}_\star$-norm for the PDE loss and the strong $L^2-$loss for the other terms (unless further specified). Poisson and QHO Inverse problems are solved by an Implicit-Euler time integration \cite{inbook} with the strong $L^2$ loss and Newton-Raphson \cite{Newton} for the Navier Stokes inverse problem, with the $H^{-1}_\star$ loss.
    \item[iv)] \emph{Analytic Descent (AD-PSM)}, deriving the PSM by the analytic descent given in Eq.~\eqref{eq:AS} by choosing the dual $H^{-1}_\star$-loss, Eq.~\eqref{eq:SC}, for the PDE-loss and the strong $L^2$-loss for the remaining terms.
\end{enumerate}
For measuring the approximation errors of a ground truth function $g : \Omega \lo \R$ by a surrogate model $u$
we evaluate
both on equidistant grids $\mathfrak{g} = (g(p_i))_{i=1,\dots,N}\in \R^N$  $\mathfrak{u} (u(p_i))_{i=1,\dots,N}\in\R^N$ of size $N$ and compute the  $l_1,l_\infty$-errors $\epsilon_1 := \|\mathfrak{g}-\mathfrak{u}\|_1 /N$, $\epsilon_\infty :=\|\mathfrak{g}-\mathfrak{u}\|_\infty$.
We used $N = 100^2$ points for the 2D problems and $N=20^4$ points for the 4D problem.
The parameter inference error is denoted with
$\epsilon_\mu :=|\mu -\mu_{gt}|$.

All  models are trained with the same number of training points $T$. For the PINN and ID-PINN methods, the training points are given by randomly sampling from an equidistant grid $G$ of size $|G|\gg N$. For the SC-PINN and the PSM methods the training points are given by the Legendre grids.  CPU-training-runtimes are reported in seconds.

\subsection{2D and 4D Poisson equations}\label{sec:P}
We start by considering the Poisson problem in dimension $m=2$ in the strong formulation with Dirichlet boundary conditions, Eq.~\eqref{eq:str_P}.

\newcommand{\tabHT}{
\begin{tabular}[t]{llll}
  & \multicolumn{2}{c}{Approximation error} & Runtime (s)  \\
$\dim=2$ & $\epsilon_1$  & $\epsilon_\infty$  &  \\
\hline
PINN & $4.43\cdot10^{-3}$ & $5.2\cdot 10^{-2}$ & $t = 886$\\
ID-PINN & $5.23\cdot10^{-3}$ & $1.9\cdot 10^{-2}$ & $t = 1356$\\
SC-PINN & $2.52\cdot10^{-3}$ & $3.33\cdot 10^{-2}$ & $t = 79.2$\\
GF-PSM & $5.37\cdot10^{-5}$ & $2.94\cdot10^{-3}$ & $t = {12.84}$\\
AD-PSM & $\bf8.79\cdot10^{-10}$ & $\bf1.25\cdot10^{-8}$ & $\bf t = {1.21}$\\
& & &\\
     & \multicolumn{2}{c}{Approximation error} & Runtime (s)  \\
    $\dim=4$ & $\epsilon_1$  & $\epsilon_\infty$  &  \\
    \hline
     GF-PSM & $1.33\cdot10^{-6}$ & $1.0\cdot10^{-3}$ & $t = {173.59s}$\\
     AD-PSM & $\bf 5.42 \cdot 10^{-8}$ & $\bf 6.37\cdot10^{-7}$ & $t = {\bf 7.66s}$
\end{tabular}
}
\begin{table}[h]
\begin{minipage}[b]{.40\textwidth}%
\centering
\vspace*{0pt}\includegraphics[width=1.0\textwidth]{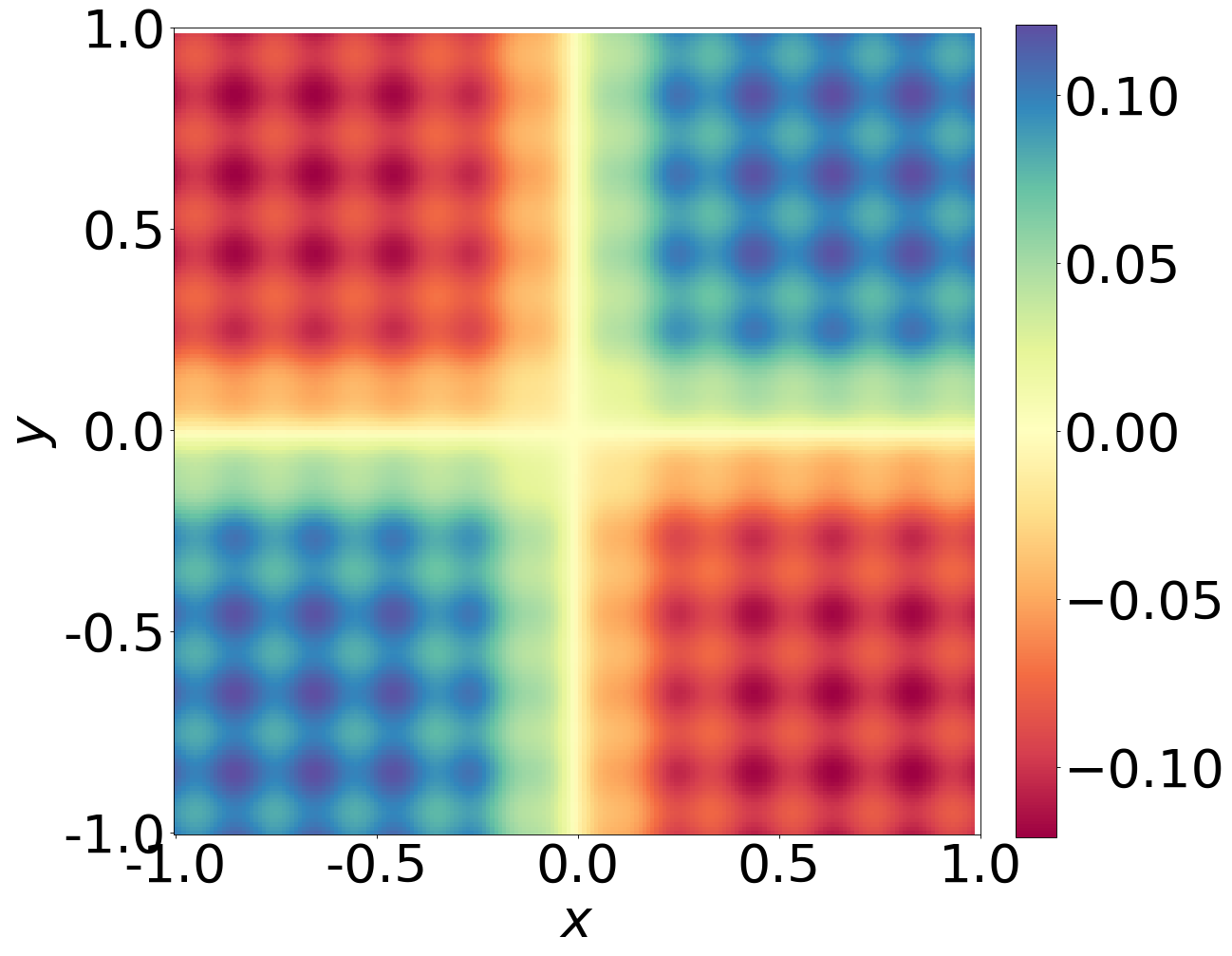}
\captionof{figure}{ Solution for 2D Poisson problem}%
\end{minipage}
\begin{minipage}[b]{.60\textwidth }%
\tabHT
\caption{Errors for 2D and 4D Poisson forward problem}\label{NP_Poisson}%
\end{minipage}%
\end{table}

\begin{experiment}[Non-periodic 2D-Poisson forward problem with hard transitions]
We consider the Poisson equation with right hand side function $f$ given by
\begin{align*}
    f(x,y) = &C (A \sin(\omega y) + \tanh(\beta y))
    (-A \omega^2 \sin(\omega x) - 2 \beta^2 \tanh(\beta x)
    \mathrm{sech}^2(\beta x)) \\
    &+ C (A \sin(\omega x) + \tanh(\beta x))(-A \omega^2\sin(\omega y) - 2 \beta^2 \tanh(\beta y) \mathrm{sech}^2(\beta y)),
    \end{align*}
with $C =0.1, A = 0.1, \beta = 5, \omega = 10\pi$. All the experiments where conducted with the same number of training points, as required for the Sobolev cubatures of degree $n=50$ in the domain and $n=100$ for the boundary. For the SC-PINN the weak $L^2$
-loss was used for the PDE loss and for the boundary.
\end{experiment}

Table~\ref{NP_Poisson} (top) reports the results and shows that the PSM methods outperform all PINN approaches, both, in accuracy and runtime. AD-PSM  reaches seven orders of magnitude smaller $\epsilon_1$-error and requires up to three orders of magnitude less runtime. The GF-PSM performance is non-compatible to AD-PSM, but still far better than the PINN alternatives. The results clearly demonstrate the PSM method to be capable of finding solutions to non-trivial linear PDEs with general non-periodic boundary conditions.

The following experiment indicates that this observation maintains true even for higher dimensional problems.

\begin{experiment}[4D Poisson equation forward problem]
We seek for a solution of a Poisson problem in dimension $m=4$. We choose
$$f(x):= -4 \omega^2 g(x),$$
with $\omega = 1$ and periodic boundary condition $g(x):= \sin(\omega x_1)\cos(\omega x_2)\sin(\omega x_3)\cos(\omega x_4)$
yielding $u(x) =g(x)$ to be the analytic solution. We choose Sobolev cubatures of degree $n=8$ for both, the domain and the boundary loss.
\end{experiment}

In Table~\ref{NP_Poisson}~(bottom) the approximation errors are reported. While all PINN approaches failed to provide any reasonable solution, the PINN-results were skipped.
In contrast, the PSMs can recover the solution accurately. We want to stress that the PSM runtimes are still smaller than the training runtimes of ID-PINN or the standard PINNs occuring for the  analogue  2D Poisson problem, validating again its superior efficiency.

\begin{experiment}[2D Poisson inverse problem]\label{exp:PI}
We consider the inverse 2D-Poisson problem, as introduced in Section~\ref{sec:AED}, Eq.~\eqref{eq:inv}: We are seeking for inferring the  parameter $\mu$ in the right hand side  $f(x) = \mu \cos(\omega x)\sin(\omega y)$, for the unknown ground truth  $\mu_{gt} = 2\omega_{gt}^2$, $\omega_{gt} =\pi$ and the corresponding PDE solution simultaneously, with the $L^2$-loss ($k=l=0$) given in equation \eqref{eq:IP_ref}. The GF-PSM is applied for a Sobolev cubature with degree $n=100$ for the boundary and $n=30$ for the PDE loss. Benchmarks for the standard PINN and the ID-PINN are executed with the same number of training points.
\end{experiment}
\newcommand{\tabIPP}{
\begin{tabular}[t]{lllll}
 & \multicolumn{3}{c}{Approximation error} & Runtime (s)\\
& $\epsilon_{\mu}$ & $\epsilon_{1}$ & $\epsilon_{\infty}$ & \\
\hline
  PINN    & $4.63\cdot10^{-1}$ & $1.13\cdot 10^{-2}$ & $1.24 \cdot 10^{-1}$ & $t\approx 1592$\\
  ID-PINN & $2.14\cdot10^{-2}$ &$8.09\cdot10^{-4}$ & $1.52 \cdot 10^{-2}$& $t\approx 2184$ \\
  SC-PINN  & $3.0\cdot10^{-4}$ & $5.49\cdot10^{-4}$ & $1.01 \cdot 10^{-2}$ & $t \approx 103$ \\
  GF-PSM  & $\mathbf{5.8\cdot10^{-8}}$ & $\mathbf{6.0\cdot10^{-10}}$&$\mathbf{3.47\cdot10^{-9}}$ & $t \approx \mathbf{0.49}$ \\
& & &\\
& & &
\end{tabular}
}

\begin{table}[t]
\begin{minipage}[b]{.3\textwidth}%
\centering
\hspace*{-22pt}\includegraphics[width=1.0\textwidth]{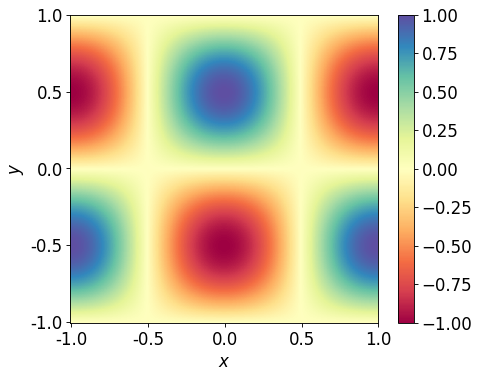}
\captionof{figure}{Solution for 2D inverse Poisson problem with $\omega_{gt} = \pi$.}%
\end{minipage}
\begin{minipage}[b]{.7\textwidth }%
\tabIPP
\caption{Errors for 2D Poisson inverse problem}\label{IP_QHO}%
\end{minipage}%
\end{table}

Table \ref{IP_QHO} reports the reached accuracy and the required runtimes. The GF-PSM outperforms all other methods by several orders of magnitude in accuracy for both the solution of the PDE, as well as the inferred parameter $\mu$. As discussed in Section~\ref{sec:AED} the analytic variation, Eq.~\eqref{eq:AS}, does not directly apply for this task and is, thus, omitted here. The exponentially fast convergence of the GF-PSM, Section~\ref{sec:AED}, is reflected in the required runtime being 4 orders of magnitude less than the PINN alternatives.
\subsection{Quantum Harmonic Oscillator in 2D}
We consider eigenvalue problem for the time-independent \emph{Quantum Harmonic Oscillator} in dimension $m=2$, which is a special case of the \emph{Schrödinger equation} with  linear potential $V(u(x)):=(x_1^2 + x_2^2)u(x)$, $u\in C^2(\Omega,\mathbb{R})$, see e.g., \cite{richard1980,griffiths2018}:
\begin{equation*}
   \li\{\begin{array}{rll}
       -\Delta u(x) + V(u(x))  &= \mu u(x)  &,  \forall x\in\Omega  \\
         u(x)  -g(x)     &= 0   &,  \forall x\in\partial\Omega\,,
\end{array}\re.
\end{equation*}
It is a classic fact, that the the eigenvalues are given by  $\mu = n_1 + n_2 + 1$, $n_1,n_2 \in \N$ with corresponding eigenfunctions
\begin{equation*}\label{eq:QHO}
g(x_1,x_2) = \frac{\pi^{-1/4}}{\sqrt{2^{n_1+n_2}n_1!n_2!}}e^{-\frac{(x_1^2+x_2^2)}{2}}H_{n_1}(x_1)H_{n_2}(x_2)\,,
\end{equation*}
whereas $H_n$ denotes the $n$-th \emph{Hermite polynomial}.

\newcommand{\tabE}{
\begin{tabular}[t]{llll}
  & \multicolumn{2}{c}{Approximation error} & Runtime (s)\\
$\mu =21$& $\epsilon_1$  & $\epsilon_\infty$  &  \\
\hline
  PINN & $6.97\cdot10^{-2}$ & $1.\cdot 10^{-3}$ & $t\approx 776$ \\
  ID-PINN & $4.29\cdot10^{-2}$ & $1.30\cdot 10^{-1}$ & $t\approx 948$ \\
  SC-PINN & $8.16\cdot10^{-4}$ & $7.27\cdot 10^{-3}$ & $t\approx 167$ \\
  GF-PSM & $ 1.6\cdot{10^{-8}}$ & $5.4\cdot10^{-8}$ & $t \approx 0.16$\\
   AD-PSM & $\mathbf{7.61\cdot10^{-13}}$ & $\mathbf{2.37\cdot10^{-12}}$ & $t \approx {\bf 0.07}$\\
   & & &\\
    $\mu =31$& $\epsilon_1$  & $\epsilon_\infty$  &  \\
    \hline
 GF-PSM& $1.09\cdot10^{-9}$ & $1.45\cdot10^{-8}$ & $t \approx 2.39$\\
 AD-PSM& $\mathbf{2.25\cdot10^{-9}}$ & $\mathbf{9.82\cdot10^{-9}}$ & $t \approx {\bf 1.07}$\\
    & & &\\
       & & &
\end{tabular}
}
\begin{table}[t!]
\begin{minipage}[b]{.4\textwidth}%
\centering
\hspace*{-80pt}\includegraphics[width=1.35\textwidth]{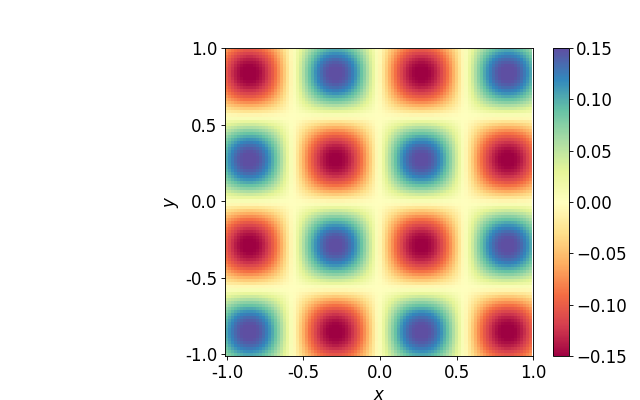}
\captionof{figure}{Solution of 2D QHO\\ forward problem with $\mu = 21$.}%
\end{minipage}
\begin{minipage}[b]{.60\textwidth }%
\tabE
\caption{Errors for 2D QHO forward problem with $\mu=21,31$.}\label{validationQ}%
\end{minipage}%
\end{table}
\begin{figure}[t!]%
      \includegraphics[width=1.0\textwidth]{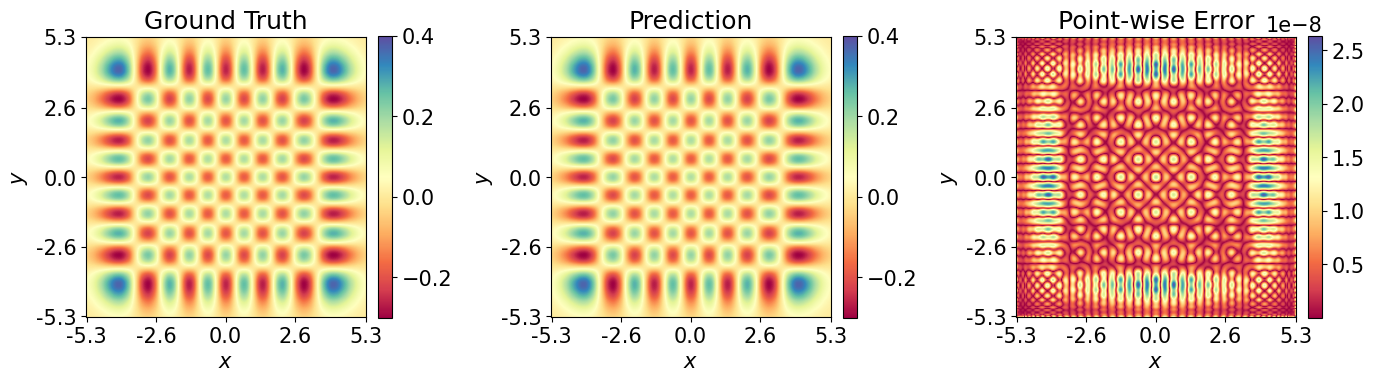}
      \caption{Solution for 2D QHO with $\mu = 31$ on $\Omega'=5.3\Omega$ due to AD-PSM.}\label{fig:QHO_AD}
\end{figure}

\begin{experiment}[QHO forward problem]
For solving the QHO forward problem with eigenvalue $\mu =21$ and extended domain $\Omega' = [-5.3, 5.3]$, GF-PSM and the AD-PSM  use Sobolev cubatures of degree $n=100$ for the boundary and $n=30$ for the PDE loss, whereas we choose $n=200$ and $n=50$ for eigenvalue $\mu =31$ on the standard hypercube $\Omega$, respectively.  The AD-PSM uses the by default chosen $H^{-1}(\Omega),*$ norm, while the GF-PSM was applied with weak $L^2$-loss, as in Eq.~\eqref{eq:wL}.
\end{experiment}
Results are reported in Table~\ref{validationQ}. SC-PINN was the only PINN method that gains reasonable results for $\mu =31$ and $\Omega = [-1,1]^2$. However, as in Section~\ref{sec:P} the PSMs-methods outperform SC-PINN in both runtime and accuracy performance. In the second scenario, $\mu =21$, $\Omega' = 5.3\Omega$, none of PINN  approaches was able to reach close approximations, while AD-PSM and GF-PSM do. AD-PSM performs best and its solution is visualised in Fig.~\ref{fig:QHO_AD}.

\begin{experiment}[QHO inverse problem]
Similar to Exp.~\ref{exp:PI}
we seek for inferring the  unknown eigenvalue $\mu$, set to $\mu_{gt}=9$, and the corresponding continuous approximation of the PDE solution simultaneously, with given data $\mathfrak{u}\in \mathbb{R}^{|A_{m,n}|}$ sampled on the Legendre grid by optimising the loss:
\begin{equation}
   \mathcal{L}[C,\mu] = \|\Delta Q_c + V(Q_c)-\mu Q_C\|^2_{L^2}+\|Q_C-\mathfrak{u}\|^2_{L^2}
\end{equation}
We choose a $n=50$ degree Sobolev cubature for the domain and $n=200$ on the boundary and compare it with the PINN and the ID-PINN for the same number of training points.
\end{experiment}

\newcommand{\tabQI}{
\begin{tabular}[t]{lllll}
 & \multicolumn{3}{c}{Approximation error}& Runtime (s)\\
& $\epsilon_{\mu}$ & $\epsilon_{1}$ & $\epsilon_{\infty}$\\
\hline
  PINN    & $6.01$ & $7.32\cdot 10^{-2}$ &$4.37\cdot 10^{-1}$ &  $t\approx 1414$\\
  ID-PINN & $6.21\cdot10^{-2}$ &$7.51\cdot10^{-3}$ & $9.40\cdot10^{-2}$&  $t\approx 1346$ \\
  SC-PINN  & $2.18\cdot10^{-4}$ & $5.68\cdot10^{-4}$ &  $1.39\cdot10^{-2}$& $t \approx 192$ \\
  GF-PSM  & $\mathbf{9.50\cdot10^{-11}}$ & $\mathbf{1.49\cdot10^{-12}}$ & $\mathbf{5.13\cdot10^{-10}}$ & $t \approx \mathbf{5}$ \\
& & &\\
& & & \\
& & &\\
& & &
\end{tabular}
}

\begin{table}[t!]
\vspace{-25pt}
\begin{minipage}[b]{.3\textwidth}%
\centering
\hspace*{-20pt}\includegraphics[width=1.0\textwidth]{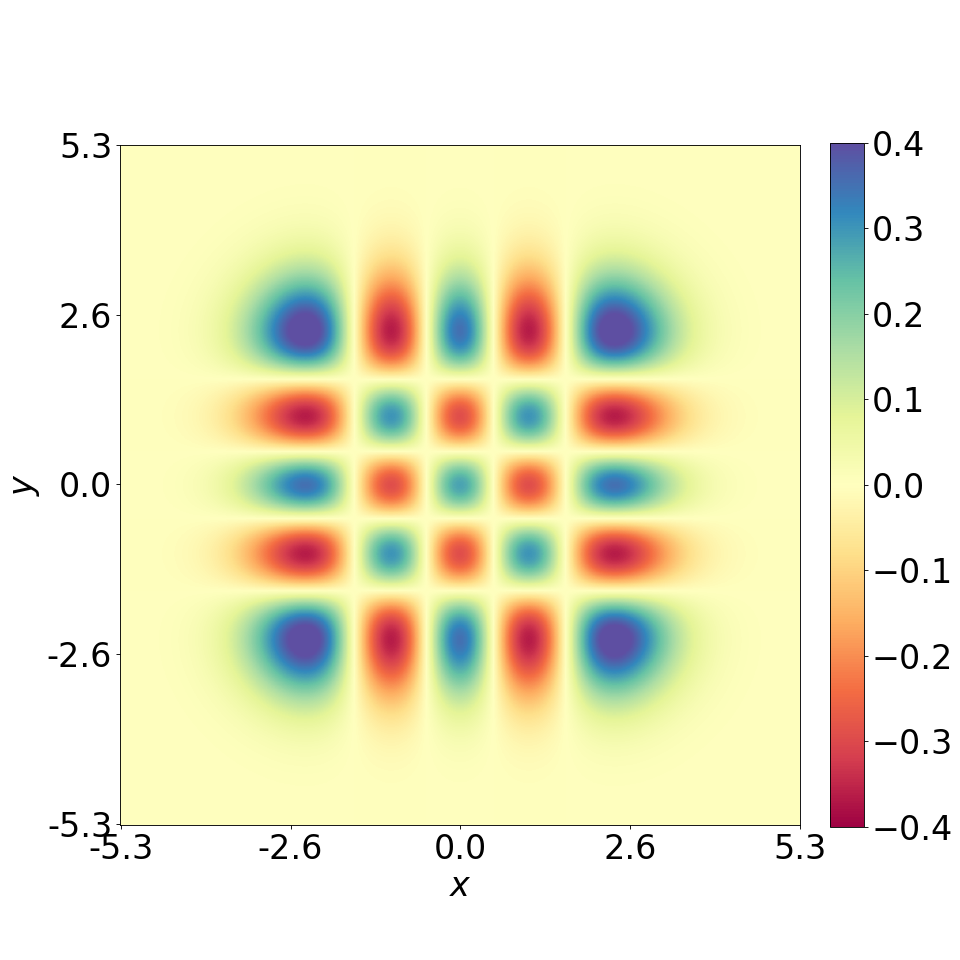}
\captionof{figure}{Solution for 2D QHO with $\mu_{gt} = 9$ on $\Omega' =5.3\Omega$.}%
\end{minipage}
\begin{minipage}[b]{.7\textwidth }%
\tabQI
\caption{Errors for 2D QHO inverse problem with $\mu_{gt}=9$}\label{IP_QHO_I}%
\end{minipage}%
\end{table}

As shown in Table \ref{IP_QHO_I} the GF-PSM outperforms the ID-PINN by several orders of magnitude in both accuracy and runtime. This reflects the strength and flexibility of the method when addressing linear inverse problems. While na\"ive, unconditioned Implicit-Euler implementations are inherently unstable the insights of Section~\ref{sec:AED} enable us to exploit the structure of the gradient flow to realize stable numerical integrators. Applying the PSM method to non-linear forward problems is our next demonstration task.

\subsection{2D Incompressible Navier Stokes equation}
We consider the incompressible 2D Navier Stokes equation as an example of a  non-linear PDE problem: Let $u = (u_1, u_2)$, $u\in C^2(\Omega,\R^2)$ be the vector velocity field and $p\in C^1(\Omega;\R)$ the scalar pressure field the equation becomes:
\begin{equation*}
   \li\{\begin{array}{rll}
       -\nu\Delta u(x,y) + (u(x,y)\cdot\nabla)u(x,y) + \nabla p(x,y)&= f(x,y)  &,  \forall (x,y)\in\Omega  \\
        \nabla\cdot u (x,y)&= 0 &,  \forall (x,y)\in\Omega \\
         u(x,y)  -g(x,y)     &= 0   &,  \forall (x,y)\in\partial\Omega\,,
\end{array}\re.
\end{equation*}
where
\begin{align*}
    f(x,y) &=2\nu\pi^2(u_1(x,y),u_2(x,y))+\pi \cos(\pi x)\cos(\pi y)(-u_1(x,y),u_2(x,y))\\
    & +\pi \sin (\pi x)\sin(\pi y)(u_2,-u_1) +\exp(\pi y)(1,\pi x)\,,\\
   g(x,y) &= [-\sin(\pi x)\cos(\pi y),\cos(\pi x)\sin(\pi y)]^T
\end{align*}
\begin{experiment}[Navier-Stokes Forward  and Inverse Problem]
We solve the Navier-Stokes forward problem by applying GF-PSM with $n=100$ and $n= 30$ degree Sobolev cubature for the boundary and the domain respectively. We set
the viscosity to $\nu = 0.05$ and use the analytic pressure field  $p=x\exp(\pi y)$ with Dirichlet boundary conditions.

The inverse problem seeks for inferring $\nu$ and the scalar pressure field $p$  for the ground truth viscosity $\nu_{\mathrm{gt}} = 0.05$ and $u_1 = -\sin(\pi x)\cos(\pi y)$, $u_2 = \cos(\pi x)\sin(\pi y)$. The errors $\epsilon_1$ and $\epsilon_\infty$ reported for this experiment, correspond to the predicted pressure against the ground truth one.
\end{experiment}

\begin{table}[h]
\begin{minipage}[b]{.3\textwidth}%
\centering
\hspace*{-20pt}\includegraphics[width=1.2\textwidth]{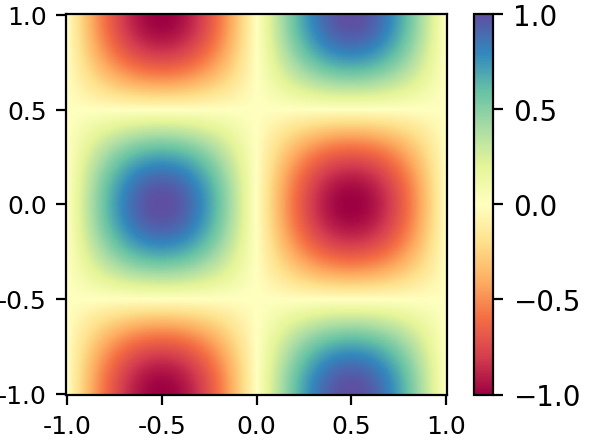}
\captionof{figure}{Solution $u_1$.}%
\end{minipage}
\begin{minipage}[b]{.70\textwidth }%
\begin{tabular}[t]{lllll}
 & &\multicolumn{2}{c}{Approximation error}& Runtime (s)\\
Forward Problem & & $\epsilon_{1}$ & $\epsilon_{\infty}$\\
\hline
  GF-PSM &$u_1$  & $3.31 \cdot10^{-10}$ & $2.35\cdot10^{-9}$ & $t \approx 405.22$ \\
  GF-PSM &$u_2$  & $3.28 \cdot10^{-10}$ & $2.35\cdot10^{-9}$ &  $t \approx 405.22$\\
& & &\\
& & &\\
& & &
\end{tabular}
\caption{Approximation errors of the forward problem. }\label{NVE}%
\end{minipage}
\end{table}
\begin{table}[h]
\center
\begin{tabular}[t]{lllll}
 & \multicolumn{3}{c}{Approximation error}& Runtime (s)\\
Inverse Problem & $\epsilon_{\nu}$ & $\epsilon_{1}$ & $\epsilon_{\infty}$\\
\hline
  GF-PSM  & $2.91\cdot10^{-16}$ & $2.63\cdot10^{-14}$ & $1.21\cdot10^{-11}$& $t \approx 0.79$
\end{tabular}
\caption{Approximation errors of the inverse problem.}
\label{tab:inv}
\end{table}

While none of the PINN approaches was able to address the problem reasonably the PSM methods reach similar accuracy as in the prior (linear) experiments, as reported in  Tables \ref{NVE},\ref{tab:inv}.


We summarise the experimental and theoretical findings in the concluding thoughts below.
\section{Conclusion}

We introduced a novel variational spectral method solving linear, non-linear, forward and inverse PDE problems. In contrast to neural network - PINN approaches  Chebyshev polynomials surve as a polynomial surrogate model - PSM, maintainig the same flexibility as PINNs.

Based on our prior work \cite{cardona2022replacing}, we  gave  weak PDE formulations, resting on the novel
Sobolev cubatures approximating general Sobolev norms. Allowing us to formulate and compute  the resulting finite-dimensional gradient flow for finding the optimal coefficients for the PSMs, in the case of linear PDEs, we could even derive the analytical solution of the gradient flow.
In particular, the  resulting efficient computation of the negative order dual Sobolev norm $\|\cdot\|_{\hmk,*}$ was demonstrated to perform best compared to the alternative formulations.
 While we  meanwhile deepened the theoretical insights, presented here, to deliver the optimal choice of the Sobolev norm beforehand these subjects are part of a follow-up study. This includes a relaxation of the Sobolev cubatures, resisting the curse of dimensionality when addressing higher dimensional problems.

In summary, the PSMs methods outperformed all other benchmark methods by far,  showing the superiority  in runtime and accuracy performance  of the PSMs formulation on the whole spectrum of the considered problems.
Since the PSMs offer the same flexibility and capabilities of PINNs, we propose to extend the presented approach in order to learn PDE solutions for ranges of boundary conditions, parameters (like diffusion constants) or dynamic time ranges. Because the gain in efficiency allowed to compute the presented benchmarks \emph{without High Performance Computing} (HPC) on a local machine, we expect so far non-reachable high-dimensional $\dim \geq 3 $, strongly varying PDE problems, appearing for instance for dynamic phase space simulations,  to become solvable when being addressed by a parallelised HPC version of the current implementation \cite{PDE_repo}.

\bibliography{REF.bib}
\bibliographystyle{icml2022}

\section*{Appendix}
The result provided in Theorem~\ref{thm_conv} is a known fact and could be also found for example in \cite{Karimi} in a more general setting. We prove it by combining the following lemmas. Given a differentiable $\lambda$-convex truncated loss
$\Lc : \R^{|A_{m,n}|}\lo \R^+$, $m,n \in \N$, as in Theorem~\ref{thm_conv}, inducing the gradient descent ODE
\begin{equation*}
 \partial_t C(t) = - \nabla \Lc(Q_{C(t)})      \quad  \,, C(0)= C_0\,,
\end{equation*}
where $C_0\in \mathbb{R}^{|A_{m,n}|}$ is some initial guess of the coefficients.
The Implicit Euler discretisation of the ODE  is given by
\begin{equation}\label{eq:EUL}
    C_{n+1} = C_n -\tau \nabla{L}[C_{n+1}]\,,
\end{equation}
where  $\tau\in \mathbb{R}$ is the learning rate.
We will use the following two definitions:
\begin{definition}
A functional $\mathcal{F}:\mathbb{R}^{|A_{m,n|}}\rightarrow \mathbb{R}$ is convex if:
\begin{equation}
    \mathcal{F}[tx+(1-t)y]\leq t\mathcal{F}[x]+(1-t)\mathcal{F}[y],
\end{equation}
it is called strictly convex, if the inequality is strict.
\end{definition}
\begin{definition}
A functional $\mathcal{F}:\mathbb{R}^{|A_{m,n}|}\rightarrow \mathbb{R}$ is coercive if:
\begin{equation}
    \lim\limits_{||u||\rightarrow \infty}\mathcal{F}[u]= \infty
\end{equation}
\end{definition}
\begin{lemma}\label{lemma_1}
Let the assumptions of Theorem~\ref{thm_conv} be fulfilled then the following estimate applies:
\begin{equation*}
    \frac{\lambda}{2}\|C_n-C_\infty\|^2\leq \Lc[C_n]-\Lc[C_\infty]\leq \frac{1}{2\lambda}\|\nabla\Lc[C_n]\|^2\,.
\end{equation*}
\end{lemma}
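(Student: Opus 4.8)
The plan is to derive both inequalities directly from the defining $\lambda$-convexity estimate Eq.~\eqref{eq:strong_conv}, using the single additional fact that the minimiser satisfies the first-order condition $\nabla\Lc[C_\infty] = 0$. This holds because $\Lc$ is differentiable and $C_\infty$ is an unconstrained global minimiser on $\R^{|A_{m,n}|}$, as assumed in Theorem~\ref{thm_conv}.

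For the \emph{left} inequality I would instantiate Eq.~\eqref{eq:strong_conv} with $x = C_n$ and $y = C_\infty$. The linear term $\nabla\Lc[C_\infty]^T(C_n - C_\infty)$ then vanishes, leaving $\Lc[C_n] \geq \Lc[C_\infty] + \frac{\lambda}{2}\|C_n - C_\infty\|^2$, which is precisely the claimed lower bound $\frac{\lambda}{2}\|C_n - C_\infty\|^2 \leq \Lc[C_n] - \Lc[C_\infty]$.

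For the \emph{right} inequality (a Polyak--{\L}ojasiewicz-type estimate) I would instead fix $y = C_n$ in Eq.~\eqref{eq:strong_conv} and minimise the right-hand side over $x \in \R^{|A_{m,n}|}$. Since that right-hand side is a strictly convex quadratic in $x$, its unique minimiser is $x^\star = C_n - \frac{1}{\lambda}\nabla\Lc[C_n]$; completing the square and substituting $x^\star$ yields the pointwise bound $\Lc[x] \geq \Lc[C_n] - \frac{1}{2\lambda}\|\nabla\Lc[C_n]\|^2$, valid for every $x$. Evaluating this at $x = C_\infty$ (equivalently, taking the infimum over $x$) gives $\Lc[C_\infty] \geq \Lc[C_n] - \frac{1}{2\lambda}\|\nabla\Lc[C_n]\|^2$, i.e.\ $\Lc[C_n] - \Lc[C_\infty] \leq \frac{1}{2\lambda}\|\nabla\Lc[C_n]\|^2$.

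I do not expect a genuine obstacle: the only step needing a word of justification is $\nabla\Lc[C_\infty] = 0$, which is immediate from differentiability and unconstrained optimality, and the quadratic minimisation is an elementary computation. As noted in the appendix this estimate is classical (cf.\ \cite{Karimi}), so the write-up should remain these few lines; the subsequent lemmas combining with the Implicit Euler / gradient-flow discretisation will carry the actual exponential-decay argument of Theorem~\ref{thm_conv}.
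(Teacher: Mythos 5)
Your proof is correct and follows essentially the same route as the paper: the right inequality is obtained in both cases by minimising the quadratic lower bound from Eq.~\eqref{eq:strong_conv} at $y=C_n$ over $x$, yielding the Polyak--{\L}ojasiewicz bound $-\tfrac{1}{2\lambda}\|\nabla\Lc[C_n]\|^2$. For the left inequality you use the first-order condition $\nabla\Lc[C_\infty]=0$ directly, whereas the paper reaches the same conclusion via the secant form of $\lambda$-convexity, the zeroth-order minimality $\Lc[\gamma_t]\geq\Lc[C_\infty]$, and a limit $t\to 0$ --- your version is the slightly more direct of the two and is equally valid in this unconstrained, differentiable setting.
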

\begin{proof}
We prove the first inequality by rephrasing the $\lambda$- convexity property,Eq.~\eqref{eq:strong_conv}. Let $\gamma_t := t x+(1-t)y$, then $\Lc = \Lc(x)$ is $\lambda$-convex if
\begin{equation*}
    \Lc[\gamma_t]\leq t \Lc[x] + (1-t) \Lc[y]-\frac{\lambda}{2}t(1-t)\|x-y\|^2\,.
\end{equation*}
By replacing $x$ and $y$ with $C_n$ and $C_\infty$, respectively, and re-arranging, we obtain:
\begin{equation*}
\frac{\lambda}{2}t(1-t)\|C_n-C_\infty\|^2\leq t(\Lc[C_n]-\Lc[C_\infty]) +\Lc[C_\infty]-\Lc[\gamma_t]
    \leq t(\Lc[C_n]-\Lc[C_\infty])\,,
\end{equation*}
where we used the minimality of $C_\infty$ for the last inequality. Dividing by $t$ and taking the limit for $t\rightarrow 0$ yields the first inequality of Lemma~\ref{lemma_1}. The second inequality follows directly from the
$\lambda$-convexity, Eq.~\eqref{eq:strong_conv}, implying
\begin{equation*}
    \Lc[C_n]-\Lc[C_\infty]\leq -\nabla\Lc[C_n]^T(C_\infty-C_n)-\frac{\lambda}{2}\|C_\infty-C_n\|^2,
\end{equation*}
We set $\mathcal{F}[C^\infty]:=\nabla\Lc[C_n]^T(C_\infty-C_n)+\frac{\lambda}{2}\|C_\infty-C_n\|_2^2$ and realise that $\mathcal{F}$ is a coercive, strictly convex functional with respect to $C^\infty$.
Hence, the uniquely determined minimum $C^*_\infty$ is given by:
\begin{equation*}
    \nabla\mathcal{F} \stackrel{!}{=} 0 \iff (C^*_\infty-C_n) = -\frac{1}{\lambda}\nabla\Lc[C_n].
\end{equation*}
In light of this fact, we can bound $-\nabla\mathcal{F}$ by
\begin{equation*}
     \Lc[C_n]-\Lc[C_\infty]\leq (\frac{1}{\lambda}-\frac{1}{2\lambda})\|\nabla\Lc[C_n]\|^2\,,
\end{equation*}
yielding the desired result.
\end{proof}
The following lemma provides the monotonicity  property of the gradient flow, being a necessary ingredient for proving the exponential convergence.
\begin{lemma}\label{lemma_nd}
Let the assumptions of Theorem~\ref{thm_conv} be fulfilled the the following estimate holds:
\begin{equation*}
    \Lc[C_{n-1}]-\Lc[C_{\infty}]\geq (1+\lambda \tau)^2(\Lc[C_{n}]-\Lc[C_{\infty}])
\end{equation*}
\end{lemma}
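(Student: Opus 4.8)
The plan is to combine the implicit Euler identity with the $\lambda$-convexity inequality~\eqref{eq:strong_conv} and then close the estimate using the gradient-domination bound from Lemma~\ref{lemma_1}. First I would rewrite the implicit Euler step~\eqref{eq:EUL} in the form $C_{n-1}-C_n = \tau\nabla\Lc[C_n]$, so that the increment of the scheme is exactly proportional to the gradient evaluated at the newer iterate $C_n$. (A word of care is warranted here: $C_n$ is the unique minimiser of the strongly convex map $C\mapsto \tfrac12\|C-C_{n-1}\|^2 + \tau\Lc[C]$, so the implicit update is well defined and $\nabla\Lc[C_n]$ in~\eqref{eq:EUL} makes sense.)

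Next I would apply the $\lambda$-convexity estimate~\eqref{eq:strong_conv} with $x=C_{n-1}$ and $y=C_n$, which gives
\[
\Lc[C_{n-1}] \;\geq\; \Lc[C_n] + \nabla\Lc[C_n]^T(C_{n-1}-C_n) + \frac{\lambda}{2}\|C_{n-1}-C_n\|^2 .
\]
Substituting $C_{n-1}-C_n = \tau\nabla\Lc[C_n]$ collapses the last two terms into $\tau\bigl(1+\tfrac{\lambda\tau}{2}\bigr)\|\nabla\Lc[C_n]\|^2$, and subtracting $\Lc[C_\infty]$ from both sides yields
\[
\Lc[C_{n-1}]-\Lc[C_\infty] \;\geq\; \bigl(\Lc[C_n]-\Lc[C_\infty]\bigr) + \tau\Bigl(1+\tfrac{\lambda\tau}{2}\Bigr)\|\nabla\Lc[C_n]\|^2 .
\]

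Finally I would invoke the second inequality of Lemma~\ref{lemma_1}, i.e.\ $\|\nabla\Lc[C_n]\|^2 \geq 2\lambda\bigl(\Lc[C_n]-\Lc[C_\infty]\bigr)$, to replace the gradient norm by a multiple of the loss gap. The coefficient in front of $\Lc[C_n]-\Lc[C_\infty]$ then becomes $1 + 2\lambda\tau\bigl(1+\tfrac{\lambda\tau}{2}\bigr) = 1 + 2\lambda\tau + \lambda^2\tau^2 = (1+\lambda\tau)^2$, which is precisely the asserted inequality. The argument is elementary once the pieces are lined up; I do not anticipate any genuine obstacle, the only subtlety being the well-posedness of the implicit step noted above and making sure the index shift in~\eqref{eq:EUL} is applied correctly.
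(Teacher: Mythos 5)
Your proposal is correct and follows essentially the same route as the paper's own proof: apply the $\lambda$-convexity inequality at $x=C_{n-1}$, $y=C_n$, use the implicit Euler identity $C_{n-1}-C_n=\tau\nabla\Lc[C_n]$ to collapse the remainder into $\tau\bigl(1+\tfrac{\lambda\tau}{2}\bigr)\|\nabla\Lc[C_n]\|^2$, and then close with the gradient-domination bound of Lemma~\ref{lemma_1}. Your added remark on the well-posedness of the implicit step is a sensible detail the paper omits, but the argument is otherwise identical.
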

\begin{proof}
Due to the $\lambda$-convexity and the Implicit Euler update, Eq.~\eqref{eq:EUL}, we realise that:
\begin{align*}
    \Lc[C_{n-1}]&\geq \Lc[C_n] +\nabla \Lc[C_n](C_{n-1}-C_n)+\frac{\lambda}{2}\|C_{n-1}-C_n\|^2\\
    &= \Lc[C_n] +\tau(\frac{\tau\lambda}{2}+1)\|\nabla \Lc[C_n]\|^2\,.
\end{align*}
Due to Lemma~\ref{lemma_1} we further conclude
\begin{equation}
    \Lc[C_{n-1}]\geq \Lc[C^{n}] + 2\lambda\tau(\frac{\tau\lambda}{2}+1)(\Lc[C_n]-\Lc[C_\infty])\,.
\end{equation}
Adding $-\Lc[C_\infty]$ at both sides provides the claim.
\end{proof}
\begin{lemma}\label{lemma_f} Let the assumptions of Theorem~\ref{thm_conv} be fulfilled and define $\hat{\lambda}:= \frac{1}{\tau}\log(1+\lambda\tau)$.
Then the sequence:
\begin{equation*}
    \Delta^n\Lc:=\Lc[C_n]-\Lc[C_\infty],
\end{equation*}
 decreases monotonically with an exponential rate of $e^{-2\hat{\lambda}\tau n}$, i.e.
 \begin{equation}
     \Delta^n\Lc\leq e^{-2\hat{\lambda}\tau n}(\Lc[C_0]-\Lc[C^\infty])
 \end{equation}
\end{lemma}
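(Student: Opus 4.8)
The plan is to obtain Lemma~\ref{lemma_f} as a telescoping consequence of the one-step contraction in Lemma~\ref{lemma_nd}, after passing from the discrete Implicit-Euler iterates to the continuous gradient flow. First I would iterate Lemma~\ref{lemma_nd}: writing $\Delta^n\Lc = \Lc[C_n]-\Lc[C_\infty]$, the estimate $\Delta^{n-1}\Lc \geq (1+\lambda\tau)^2\,\Delta^n\Lc$ gives, by induction on $n$,
\begin{equation*}
\Delta^n\Lc \leq (1+\lambda\tau)^{-2n}\,\Delta^0\Lc\,.
\end{equation*}
Then I would rewrite $(1+\lambda\tau)^{-2n}$ using the definition $\hat\lambda := \frac{1}{\tau}\log(1+\lambda\tau)$, which is exactly the statement that $(1+\lambda\tau) = e^{\hat\lambda\tau}$, so $(1+\lambda\tau)^{-2n} = e^{-2\hat\lambda\tau n}$. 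This already yields the displayed inequality of Lemma~\ref{lemma_f} for the discrete iterates at the nodes $t_n = \tau n$.

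The remaining point is to connect this to the continuous-time flow claimed in Theorem~\ref{thm_conv}, i.e.\ to get $\Lc[C(t)]-\Lc[C_\infty] \leq e^{-2\lambda t}(\Lc[C_0]-\Lc[C_\infty])$. The clean way is to take $\tau \to 0$ with $n = \lfloor t/\tau\rfloor$: since $\hat\lambda = \frac{1}{\tau}\log(1+\lambda\tau) \to \lambda$ as $\tau\to 0$, the discrete rate $e^{-2\hat\lambda\tau n}$ converges to $e^{-2\lambda t}$, and standard consistency/stability of the Implicit-Euler scheme for the gradient flow ODE (whose right-hand side $-\nabla\Lc$ is the gradient of a $\lambda$-convex, hence well-behaved, functional) gives $C_n \to C(t)$, so $\Delta^n\Lc \to \Lc[C(t)]-\Lc[C_\infty]$ by continuity of $\Lc$. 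Alternatively, and perhaps more directly, one can run the continuous argument: along the flow, $\frac{d}{dt}\big(\Lc[C(t)]-\Lc[C_\infty]\big) = -\|\nabla\Lc[C(t)]\|^2 \leq -2\lambda\big(\Lc[C(t)]-\Lc[C_\infty]\big)$ by the second inequality of Lemma~\ref{lemma_1} (the Polyak--Łojasiewicz bound), and Grönwall's inequality then gives the exponential decay with rate $2\lambda$; the first inequality of Lemma~\ref{lemma_1} supplies the left-hand bound $\frac{\lambda}{2}\|C(t)-C_\infty\|^2 \leq \Lc[C(t)]-\Lc[C_\infty]$.

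I expect the main obstacle to be the passage from the discrete iterates to the continuous flow rigorously: one must justify that the Implicit-Euler update \eqref{eq:EUL} is well-defined (existence and uniqueness of $C_{n+1}$, which follows because the proximal map of a $\lambda$-convex functional is single-valued) and that the iterates actually converge to the ODE solution, or else simply present the continuous Grönwall argument as the primary proof and treat the discrete lemmas as the scaffolding that motivates it. Since Lemmas~\ref{lemma_1}--\ref{lemma_nd} are already in hand and the telescoping step is immediate, the write-up is short: assemble the induction, substitute the definition of $\hat\lambda$, and either invoke $\tau\to 0$ or re-derive the bound directly in continuous time using the PL inequality from Lemma~\ref{lemma_1}.
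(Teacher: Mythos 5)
Your proof is correct and is essentially the paper's own argument: iterate Lemma~\ref{lemma_nd} to get $\Delta^n\Lc \leq (1+\lambda\tau)^{-2n}\Delta^0\Lc$ and rewrite $(1+\lambda\tau)^{-2n}=e^{-2\hat\lambda\tau n}$ via the definition of $\hat\lambda$. The additional discussion of the $\tau\to 0$ limit and the Gr\"onwall alternative concerns the proof of Theorem~\ref{thm_conv} rather than this lemma, and there too it matches what the paper does.
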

\begin{proof}
Due to Lemma \eqref{lemma_nd} we compute
\begin{align*}
     e^{2\hat{\lambda}\tau n}(\Lc[C_n]-\Lc[C_\infty]) &= (1+\lambda\tau)^{2n}(\Lc[C_n]-\Lc[C_\infty])
     \\&\leq (1+\lambda\tau)^{2(n-1)}(\Lc[C_{n-1}]-\Lc[C_\infty])\\
     & \cdots\\
     & \leq\Lc[C_0]-\Lc[C_\infty]\,.
\end{align*}
\end{proof}
\begin{proof}[Proof of Theorem~\ref{thm_conv}]
Theorem \eqref{thm_conv} now follows by combing
Lemma \eqref{lemma_1} and \eqref{lemma_f} yielding:
\begin{equation}
    \frac{1}{\lambda}\|C_n-C_\infty\|_2^2\leq\Lc[C_n]-\Lc[C_\infty]\leq e^{-2\hat{\lambda}\tau n}(\Lc[C_0]-\Lc[C_\infty])\,.
\end{equation}
Thus, for $\tau\rightarrow 0$, it follows by the definition of $\hat{\lambda}$ that $\hat{\lambda}\rightarrow \lambda$ and $C_n\rightarrow C(t)$, with $t = n\tau$ due to the continuity of $C=C(t)$ inherited from the differentiability of $F$. Hence, the continuity of the norm implies the statement.
\end{proof}

\begin{remark}
Lemma \ref{lemma_nd} implies that also the Implicit Euler discretised gradient flow, converges exponentially fast.
\end{remark}

\end{document}